\documentclass[12pt]{article}

\usepackage{amsmath,amsthm}
\usepackage{mathtools}
\usepackage{xcolor}
\usepackage{amssymb}
\usepackage{amsfonts}
\usepackage{graphicx}
\usepackage{booktabs}
\usepackage{units}
\usepackage{xcolor}
\usepackage{comment}
\usepackage{caption}
\usepackage{subcaption} 
\usepackage{dblfloatfix}   
\usepackage{algorithm}
\usepackage{algorithmicx}
\usepackage{algpseudocode}
\algrenewcommand\algorithmicindent{0.5em}

\usepackage{hyperref}
\hypersetup{
    colorlinks=true,
    linkcolor=blue,     
    urlcolor=blue,
}
\usepackage{xparse}

\usepackage{tikz}
\usepackage{pgfplots}
\usepackage{appendix}

\usepackage{enumitem}

\floatname{algorithm}{Pseudo code}
\DeclareMathOperator{\ran}{ran}

\newtheorem{definition}{Definition}

\newcounter{thmcounter}[section]  
\renewcommand{\thethmcounter}{\thesection.\arabic{thmcounter}}

\newcommand{\defthmwithqed}[2]{%
  \NewDocumentEnvironment{#1}{ o }{%
    \refstepcounter{thmcounter}
    \begin{trivlist}%
      \item[\hskip \labelsep \bfseries #2~\thethmcounter%
        \IfValueT{##1}{\ (##1)}.]%
  }{%
    \hfill $\square$%
    \end{trivlist}%
  }%
}

\defthmwithqed{theorem}{Theorem}
\defthmwithqed{lemma}{Lemma}
\defthmwithqed{assumption}{Assumption}
\defthmwithqed{modelass}{Modeling Assumption}
\defthmwithqed{proposition}{Proposition}
\defthmwithqed{prob}{Problem}
\defthmwithqed{corollary}{Corollary}
\defthmwithqed{remark}{Remark}
\defthmwithqed{example}{Example}

\usepackage{tikz}
\usetikzlibrary{arrows.meta,positioning,calc,decorations.pathmorphing}

{\left(\begin{smallmatrix}}
{\end{smallmatrix}\right)}

\newenvironment{smallbmatrix}
{\left[\begin{smallmatrix}}
{\end{smallmatrix}\right]}%

\title{Delay-dependent passivity and stability of linear port-Hamiltonian systems}
\author{Ikram El Haskouki\footnote{ENSAM, Hassan II University of Casabalanca, 150 Nile Boulevard, Casablanca 20670, Morocco (e-mail: \texttt{ikram.elhaskouki[at]gmail.com})} and Hannes Gernandt\footnote{University of Wuppertal, Gaußstraße 20, 42119 Wuppertal and Fraunhofer IEG, Fraunhofer Research Institution for Energy Infrastructures and Geotechnologies IEG, 03046 Cottbus, Germany  (e-mail: \texttt{gernandt[at]uni-wuppertal.de})} 
}

\date{\today}

\begin{document}

\maketitle

\begin{abstract}
We study delay-dependent and independent passivity properties of linear port-Hamiltonian systems with delay terms. We derive passivity conditions using a particular storage function and Wirtinger-based inequalities. Furthermore, we recall several 
conditions for delay-dependent and independent exponential stability. Finally, we present an application of our passivity and stability results to delay equations arising in the modeling of district heating networks and delayed-damping of oscillators.
\end{abstract}

\textbf{Keywords:} 
Delay systems, district heating, linear systems, passive systems, port-Hamiltonian systems, stability. 

\maketitle

\section{Introduction}
\label{sec:introduction}
Port-Hamiltonian systems provide an energy-based framework for the modeling, analysis, and control of physical systems. Their structure explicitly separates energy-conserving, dissipative, and port variables, which makes fundamental system-theoretic properties such as passivity, stability, and compositionality directly accessible. In particular, for finite-dimensional linear port-Hamiltonian systems of the form
\begin{align}
    \dot x(t)&=(J-R)Hx(t)+Bu(t),\\
    y(t)&=B^\top Hx(t),\nonumber
\end{align}
with $J=-J^\top$, $R\geq 0$, and $H>0$, the Hamiltonian $\frac12 x^\top Hx$ naturally defines a storage function. This immediately yields a passivity inequality with respect to the port variables $u$ and $y$. These structural properties are one of the main reasons why port-Hamiltonian systems are particularly useful for the modeling and control of interconnected physical systems such as electrical power systems \cite{FIAZ2013477,Schiffer2016,gernandt2024port} or district heating systems \cite{HauschildMarheinekeMehrmannetal.2019,Strehle}.

In many applications, however, delays occur naturally. They may arise from transport phenomena, communication networks, actuator and sensor dynamics, or from distributed-parameter effects that are approximated by finite-dimensional delay models. Such effects are relevant, for instance, in thermal and hydraulic networks, where transport delays play an essential role \cite{Li2025,HauschildMarheinekeMehrmannetal.2019,RoseTurnpike}. The inclusion of delayed state terms may substantially change the qualitative behavior of the system. In particular, properties that are immediate in the delay-free port-Hamiltonian case, such as passivity or stability, are in general no longer automatically preserved. Additional conditions are therefore required to determine whether a port-Hamiltonian system with delay terms remains passive or stable.

In this paper, we consider linear port-Hamiltonian systems with multiple constant delays of the form
\begin{align}
\label{eq:pH_delay}
    \dot x(t)
    &=(J-R)Hx(t)+\sum_{i=1}^q Z_iHx(t-\tau_i)+Bu(t), \quad t\geq 0,\\
    y(t)
    &=B^\top Hx(t),\qquad
    x(t)=\phi(t), \qquad t\in[-\tau_q,0],
    \nonumber
\end{align}
where $J=-J^\top\in\mathbb{R}^{n\times n}$, $R\geq 0$, $H>0$, $B\in\mathbb{R}^{n\times m}$, and $Z_i\in\mathbb{R}^{n\times n}$ for $i=1,\ldots,q$. The delays satisfy
\[
0<\tau_1\leq \cdots \leq \tau_q,
\]
and $\phi\in C([-\tau_q,0],\mathbb{R}^n)$ denotes the initial history. The matrices $Z_i$ describe the delayed interconnection terms and are not assumed to preserve the port-Hamiltonian dissipation structure. Hence, even if the undelayed part is passive, the full delay system need not be passive.

The inclusion of the delayed state terms in \eqref{eq:pH_delay} is not a
minor perturbation from the structural point of view. While the delay-free
part $(J-R)H$ has the standard port-Hamiltonian decomposition into an
energy-conserving and a dissipative part, the matrices $Z_i$ are general
delayed interconnection terms and need not preserve this decomposition.
Consequently, passivity, stability, and interconnection properties that are
immediate for delay-free port-Hamiltonian systems may be destroyed when
additional time-delay terms are added.

Port-Hamiltonian systems with delays have been considered in several
settings. Some works study delayed interconnections or delayed
port-Hamiltonian models motivated by particular applications, and often focus
primarily on stability properties; see, for instance,
\cite{KaoPasumarthy2012,SchifferFOR16,AouesLES2015}. Other recent work
develops abstract classes of port-Hamiltonian time-delay systems and
delay-independent passivity results \cite{BreitHU24}. However,
delay-dependent passivity criteria for finite-dimensional linear
port-Hamiltonian systems with general delayed state terms are less developed.

On the other hand, the theory of linear time-delay systems provides a rich
set of tools for stability and passivity analysis, including
Lyapunov--Krasovskii functionals, frequency-domain criteria, and integral
inequalities; see, e.g.,
\cite{HaleVerduynLunel1993,GuKharitonovChen2003,Fridman2014,
NiculescuLozano2001,FridmanShaked2002,Seuret2013, seuret2015hierarchy}. These methods are usually formulated for general
delay systems and therefore do not directly exploit the port-Hamiltonian structure. Thus, there remains a gap between structure-preserving
port-Hamiltonian analysis and delay-dependent passivity techniques for
general delayed systems.

The aim of this paper is to bridge this gap by deriving delay-dependent
passivity and stability conditions for linear port-Hamiltonian systems with
multiple constant delay terms. The proposed conditions combine
Lyapunov--Krasovskii storage functionals \cite{Fridman2014} and Wirtinger-based estimates \cite{Seuret2013} with
the port-Hamiltonian structure of the undelayed dynamics.

The main contributions of this paper are as follows.
\begin{itemize}
    \item We derive delay-dependent passivity conditions for linear port-Hamiltonian systems with multiple constant delays in form of a linear matrix inequality (LMI). 
    
    \item We show that the resulting passivity property is preserved under negative feedback interconnections. 
    
    \item We discuss delay-dependent and delay-independent stability of port-Hamiltonian delay systems. 
    
    \item We apply the theoretical results to delay equations arising in the modeling of district heating systems and to an oscillator example with delay-dependent damping and stiffness.
\end{itemize}

The remainder of the paper is organized as follows. In Section~\ref{sec:passivity}, we study passivity of port-Hamiltonian systems with delay terms and derive delay-dependent as well as delay-independent sufficient conditions. We also show that passivity is preserved under negative feedback interconnections. Section~\ref{sec:stability} is devoted to stability of linear port-Hamiltonian delay systems and recalls several delay-dependent and delay-independent stability criteria. Finally, the theoretical results are illustrated by examples motivated by district heating networks and delayed mechanical systems.

\subsection*{Notation}
Throughout this paper, $\|\cdot\|$ denotes the Euclidean norm in $\mathbb{R}^n$. For $x \in  C([-\tau,0],\mathbb{R}^{n}),$ and 
$t \geqslant 0,$ one defines the function  
$x_t \in  C([-\tau,0],\mathbb{R}^{n})$ by 
$x_t(\theta) = x(t+\theta),$ for all $\theta 
\in [-\tau, 0].$ The space $C([-
\tau,0],\mathbb{R}^{n})$ of continuous functions defined from $[-
\tau,0]$ to $\mathbb{R}^{n}$  is equipped with the norm
$ \Vert \psi \Vert_{\infty} = \sup\limits_{-\tau \leqslant 
s \leqslant 0} \Vert \psi(s)\Vert.$ Furthermore, the space $C^1([-
\tau,0],\mathbb{R}^n)$ denotes the set of all functions from $[-\tau,0]$ 
into $\mathbb{R}^n$ that are continuously differentiable.
The notation $P>0$ ($P \ge 0$), for $P \in \mathbb{R}^{n \times 
n}$ means that $P$ is symmetric and positive definite (positive 
semi-definite) and $\mathrm{sym}(A) := \frac{1}{2}(A + A^\top)$ denotes the symmetric part of a matrix $A \in \mathbb{R}^{n \times n }.$ We denote by $I_n$ the identity matrix in $\mathbb{R}^{n \times n}$ and $0$ represents the zero 
matrix of appropriate dimension.  
By $\mathrm{diag}(A_i)_{i=1}^{q}$ we denote a~block-diagonal matrix 
 with matrices $A_i$, for all $i=1,\ldots, q$ being its diagonal 
 blocks. In symmetric block matrices, the symbol $*$ stands for the symmetric term corresponding to the associated off-diagonal block.

\section{Passivity of port-Hamiltonian systems with delay terms}
\label{sec:passivity}

In this section, we study delay-dependent and independent passivity properties of linear port-Hamiltonian systems with delay terms.

\begin{definition}
The delay system \eqref{eq:pH_delay} is called \emph{passive for fixed delays $0<\tau_1\leq\ldots\leq\tau_q$} if there exists a storage function $\mathcal{S}:C^1([-\tau_q,0],\mathbb{R}^n)\rightarrow[0,\infty)$ with $\mathcal{S}(0)=0$ such that the following dissipation inequality holds for all $0\leq t_0\leq t_1<\infty$
\begin{align}
    \label{eq:dissip_ineq_cont_time}
\mathcal{S}(x_{t_1})-\mathcal{S}(x_{t_0})\leq \int_{t_0}^{t_1}y(s)^\top u(s)\mathrm{d}s,
\end{align}
and all initial histories $\phi\in C^1([-\tau_q,0],\mathbb{R}^n)$, 
where $x_t=x\vert_{[t-\tau_q,t]}\in C([-\tau_q,0],\mathbb{R}^n)$ is given by $x_t(s)=x(t+s)$ where $x$ is the solution to \eqref{eq:pH_delay} for some history $\phi$. 

If the dissipation inequality \eqref{eq:dissip_ineq_cont_time} holds for all $\tau\geq 0$, then we call the system \eqref{eq:pH_delay} \emph{passive independent of the delay.}
\end{definition}

In the following, we simplify the system representation without changing the passivity properties. Since $H$ is positive definite, we can perform a change of variables $\hat x:= H^{1/2} x$ and obtain 
\begin{align*}
H^{1/2}\dot x (t)&=\underbrace{H^{1/2}(J-R)H^{1/2}}_{=:\hat J-\hat R}\underbrace{H^{1/2} x(t)}_{=:\hat x(t)}\\&+ \sum_{i=1}^q\underbrace{H^{1/2}Z_iH^{1/2}}_{=:\hat Z_i}H^{1/2}x(t-\tau_i)+ H^{1/2}Bu(t).
\end{align*}
This leads to a pH system given in the following form 
\begin{align}
\label{eq:pH_delay_simple}    \dot x(t)&=\underbrace{(\hat J-\hat R)}_{=:\hat A}x(t)+\sum_{i=1}^q\hat Z_ix(t-\tau_i)+\hat Bu(t), \qquad  t \geq  0, \\
y(t)&=\hat B^\top x(t),\qquad \qquad  x(\tau)= \phi(\tau), \qquad \tau \in [-\tau_q , 0], \nonumber
\end{align}
which will be used to derive sufficient conditions on passivity. Since we perform a state-space transformation and do not change the input and output variables when going from \eqref{eq:pH_delay} to \eqref{eq:pH_delay_simple}, the passivity  of the two systems for a certain delay is equivalent to each other.

\subsection{Delay-dependent passivity}

Passivity of time-delay systems is typically studied 
via Lyapunov–Krasovskii functionals \cite{Fridman2014}, and in 
this paper we consider the following form 
\begin{align}  
 \nonumber 
\mathcal{S}(x_t)&=\xi_q(t)^\top  \mathcal{P} \xi_q(t)
 +\sum_{i=1}^q \int_{t-\tau_i}^t x(s)^\top \Theta_i x(s) \,\mathrm{d}s\\
& \quad + \sum_{i=1}^q \tau_i \int_{t-\tau_i}^t\int_s^t x(v)^\top \Omega_i x(v)\,\mathrm{d}v\,\mathrm{d}s,
\label{eq:storage_fixed_delay_PH} \\
&\quad + \sum_{i=1}^q \tau_i \int_{t-\tau_i}^t\int_s^t \dot{x}(v)^\top \Gamma_i\dot{x}(v)\,\mathrm{d}v\,\mathrm{d}s,\nonumber 
\end{align}
where $ \mathcal{P}, \Theta_i, \Gamma_i, \Omega_i \geq 0,$ with $ i=1,\ldots, q$ and 
\[
 \xi_q(t)=
\begin{bmatrix}
x(t)^\top  &\int_{-\tau_1}^{0} x_t(s)^\top  \,\mathrm{d}s &  \ldots &\int_{-\tau_q}^{0} x_t(s)^\top  \,\mathrm{d}s 
\end{bmatrix}^\top\in\mathbb{R}^{ n(q+1)} .
\]
\begin{remark}If $\Gamma_i=0$, for all $i=1,\ldots,q$ the storage function can be considered on the larger domain  $C([-\tau_q,0],\mathbb{R}^n)$.  \end{remark}
In the remainder, we aim to construct a passive output for the port-Hamiltonian delay system of the form 
\begin{align}
\label{eq:passive_long}
y_{\rm p}(t)=Cx(t)+\sum_{i=1}^q C_i x(t-\tau_i)+\sum_{i=1}^q \hat C_i\int_{-\tau_i}^0 x_t(s)\mathrm{d}s.
\end{align}
Using another extended state vector
\begin{align*}
\zeta_q(t)&=\left[
x(t)^\top ~
x(t-\tau_1)^\top ~ \cdots ~x(t-\tau_q)^\top  ~ ~\right.\\ &
\left.\tfrac{1}{\tau_1}\left(\int_{-\tau_1}^0 x_t(s)\mathrm{d}s\right)^\top  ~\cdots ~ \tfrac{1}{\tau_q}\left(\int_{-\tau_q}^0  x_t(s)\mathrm{d}s\right)^\top  ~u(t)^\top
\right]^\top
\end{align*}
the output equation can be cast in a more compact way as
\begin{align}
    \label{eq:def_passive_output}
y_{\rm p}(t)=\underbrace{\begin{bmatrix}
    C& C_1 & \ldots & C_q & \tau_1\hat{C}_1 & \ldots & \tau_q\hat{C}_q & 0
\end{bmatrix}}_{=:\mathcal{C} }\zeta_q(t).
\end{align}
In the following, we derive a Wirtinger-based inequality for semi-definite weights which essentially follows from the case of positive definite weights~\cite{Seuret2013,SeuretGF2013}, but this relaxation of the assumptions is necessary to study the passivity (cf.\ Remark~\ref{rem:passivity}).
\begin{lemma}\label{Lemma:Wirtinger}
For a positive semi-definite matrix $\Gamma\geq 0$ and any continuously differentiable function $\omega:[a,b]\rightarrow \mathbb{R}^n$ it holds
\begin{align}
(b-a)\int_a^b\dot \omega(\xi)^\top \Gamma\dot\omega(\xi)\mathrm{d}\xi&\geq (\omega(b)-\omega(a))^\top \Gamma (\omega(b)-\omega(a))+3\tilde{\omega}^\top \Gamma \tilde{\omega},  
\label{ineq:wirtinger}
\end{align}
with $\tilde{\omega}:=\omega(a)+\omega(b)-\frac{2}{b-a}\int_a^b\omega(\xi)\mathrm{d}\xi$.
\end{lemma}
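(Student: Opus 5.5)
The plan is to reduce the semi-definite case to a scalar or vector statement without any weight, using a square-root factorization of $\Gamma$. Since $\Gamma\geq 0$ is symmetric, write $\Gamma=\Gamma^{1/2}\Gamma^{1/2}$ with $\Gamma^{1/2}\geq 0$ symmetric. Then set $v(\xi):=\Gamma^{1/2}\omega(\xi)$, which is again continuously differentiable on $[a,b]$ with $\dot v=\Gamma^{1/2}\dot\omega$. Every quadratic form in \eqref{ineq:wirtinger} becomes a squared Euclidean norm of the corresponding expression in $v$. For instance, $\dot\omega^\top\Gamma\dot\omega=\|\dot v\|^2$, and $\tilde\omega^\top\Gamma\tilde\omega=\|\tilde v\|^2$ with $\tilde v=\Gamma^{1/2}\tilde\omega$ formed from $v$ in the same way, by linearity of the integral. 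The claim therefore reduces to the unweighted inequality
\[
(b-a)\int_a^b\|\dot v(\xi)\|^2\mathrm{d}\xi\geq \|v(b)-v(a)\|^2+3\|\tilde v\|^2 .
\]
This is exactly the identity-weight case of the Wirtinger-based inequality of \cite{Seuret2013}, so no positive definiteness is actually needed.

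For completeness I would prove the unweighted inequality directly by an orthogonal projection argument, which also makes its componentwise validity clear. After the affine change of variables $\xi=a+(b-a)s$, it suffices to take $[a,b]=[0,1]$. Consider the shifted Legendre polynomials $p_0(s)=1$ and $p_1(s)=2s-1$ in $L^2(0,1)$; they are orthogonal with $\|p_0\|^2=1$ and $\|p_1\|^2=1/3$. Bessel's inequality applied to each component of $\dot v$ gives
\[
\int_0^1\|\dot v\|^2\geq \Big\|\int_0^1\dot v\Big\|^2+3\Big\|\int_0^1 (2s-1)\dot v(s)\,\mathrm{d}s\Big\|^2 .
\]
Here $\int_0^1\dot v=v(1)-v(0)$. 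Integrating by parts, $\int_0^1(2s-1)\dot v\,\mathrm{d}s=v(1)+v(0)-2\int_0^1 v$, which is precisely $\tilde v$.

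The remaining step is to undo the rescaling and check that the factors $(b-a)$ and $\frac{2}{b-a}$ land as stated. With $w(s)=v(a+(b-a)s)$ we have $\dot w=(b-a)\dot v$, so $\int_0^1\|\dot w\|^2=(b-a)\int_a^b\|\dot v\|^2$. Also $\int_0^1 w=\frac{1}{b-a}\int_a^b v$, which gives exactly \eqref{ineq:wirtinger}. I expect the only point needing care to be this bookkeeping of the integration by parts and the scaling; the semi-definiteness of $\Gamma$ causes no difficulty once the square root is used, because no inverse of $\Gamma$ ever appears.
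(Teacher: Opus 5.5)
Your proof is correct, but it takes a different route from the paper.

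The paper handles the semi-definite case as follows. It splits $\mathbb{R}^n=\ker\Gamma\oplus\ran\Gamma$ and writes the range component of $\omega$ as $\hat B\hat\omega$, where $\hat B$ is a basis matrix of $\ran\Gamma$. It then observes that differentiation respects this splitting. Finally, it applies the positive definite Wirtinger inequality (cited from Fridman's Lemma 3.6) to $\hat\omega$ with the positive definite weight $\hat B^\top\Gamma\hat B$.

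You instead factor $\Gamma=\Gamma^{1/2}\Gamma^{1/2}$ and pass to $v=\Gamma^{1/2}\omega$. This turns every weighted form into an unweighted Euclidean one in a single step. You need no basis choice and no check that $\dot\omega_R\in\ran\Gamma$ and $\dot\omega_K\in\ker\Gamma$. You then prove the unweighted inequality yourself, using Bessel's inequality for the orthogonal pair $1$, $2s-1$ on $[0,1]$, integration by parts, and rescaling. All of these steps check out, in particular $\int_0^1\|\dot w\|^2=(b-a)\int_a^b\|\dot v\|^2$ and $\tilde w=\tilde v$.

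Your route gives a self-contained argument with several advantages:
\begin{itemize}
\item It shows that the weighted inequality is really a componentwise scalar fact.
\item It explains the constants $1$ and $3$ as the reciprocals of $\|p_0\|^2$ and $\|p_1\|^2$.
\item It only needs $\dot\omega$ square integrable.
\item It extends verbatim to higher-order Legendre polynomials, that is, to the Bessel--Legendre inequalities mentioned in Remark~\ref{rem:passivity}(b).
\end{itemize}
The paper's route is shorter because it leans on the cited positive definite result, at the cost of some bookkeeping with the kernel/range decomposition.
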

\begin{proof}
For positive definite $\Gamma$ the estimate~\eqref{ineq:wirtinger} was proven in \cite[Lemma 3.6]{Fridman2014}. If $\Gamma$ is semi-definite, then we decompose the space $\mathbb{R}^n=\ker \Gamma \oplus \ran \Gamma$ and accordingly $\omega(t)=\omega_R(t)+\omega_K(t)$ and we can write $\omega_R(\xi)=\hat B\hat\omega(t)$ where $\hat B$ is a matrix whose columns are composed of basis vectors of $\ran \Gamma$. Moreover, $\dot \omega(t)=\dot\omega_R(t)+\dot\omega_K(t)$ and the closedness of the subspaces $\ker \Gamma$ and $\ran \Gamma$ implies that $\dot\omega_R(t)\in \ran \Gamma$ and $\dot\omega_K(t)\in \ker \Gamma$ holds for all $t\in[a,b]$. Summarizing, we find   
\begin{align*}
\int_a^b\dot \omega(\xi)\Gamma \dot\omega(\xi)\mathrm{d}\xi&=\int_a^b\dot \omega_R(\xi)\Gamma\dot\omega_R(\xi)\mathrm{d}\xi=\int_a^b\dot{\hat\omega}(\xi)\hat B^\top \Gamma \hat B\dot{\hat\omega}(t)\mathrm{d}\xi.
\end{align*}
Since $\hat B^\top \Gamma\hat B$ is positive definite and therefore we obtain the estimate from the positive definite result from \cite[Lemma 3.6]{Fridman2014} and using the decomposition again to obtain $\omega$ in the estimate \eqref{ineq:wirtinger}.
\end{proof}

\begin{theorem}
\label{thm:delay_passive}
The pH system with delay terms \eqref{eq:pH_delay_simple} is passive for the delays $0<\tau_1\leq\ldots \leq\tau_q$ with storage function $\mathcal{S}$ given by \eqref{eq:storage_fixed_delay_PH} if there exist positive semi-definite matrices $\mathcal{P}, \Theta_i,$ $\Omega_i$, $\Gamma_i$ such that
\begin{equation}\label{eq:pH_NLMI}
2{\rm sym}(G_2^\top \mathcal{P}G_1)+ M_1+ M_{2,\tau}+M_{3,\tau}+M_{4}+M_5 \leq 0,
\end{equation}
holds, where 
\begin{align*}
G_1&=\begin{bmatrix}
    I_n & 0 & 0&0\\
    0 &0& \text{diag} (\tau_i I_n)_{i=1}^q& 0\\
\end{bmatrix},\quad
G_2&=\begin{bmatrix}
\hat{A}&  \left[\begin{smallmatrix}\hat Z_1 &\ldots & \hat Z_q\end{smallmatrix}\right] & 0  & \hat{B}\\
\left[\begin{smallmatrix} I_n & \ldots &I_n \end{smallmatrix}\right]^\top & -I_{n\cdot q}& 0 & 0\\
\end{bmatrix},
\end{align*}
and the matrices $M_1$, $M_{2,\tau}$, $M_{3,\tau}$, $M_{4}$ and $M_5$ are defined below
\begin{align*}
 M_1= \begin{bmatrix}
      \sum_{i=1}^q \Theta_i& 0& 0 & 0 \\ 
     0 & -\mathrm{diag}(\Theta_i)_{i=1}^q & 0 & 0  \\
      0& 0 & 0 & 0\\
       0& 0 & 0 & 0
\end{bmatrix},
\end{align*}
and with $\Gamma_{\Sigma,\tau}:=\sum_{i=1}^q \tau_i^2  \Gamma_i$, $\Omega_{\Sigma,\tau}:=\sum_{i=1}^q \tau_i^2  \Omega_i$ and $\Gamma_{\Sigma,1}:=\sum_{i=1}^q  \Gamma_i$ we find 
\begin{align*}
M_{2,\tau}=\begin{bmatrix}
  \hat A^\top \Gamma_{\Sigma,\tau} \hat A & M_2^{12} & 0  &  \hat A^\top \Gamma_{\Sigma,\tau}\hat B \\
 *&  \begin{bmatrix}
\hat Z_i^\top   \Gamma_{\Sigma,\tau} \hat Z_j  
 \end{bmatrix}_{i,j=1}^q& 0 &(M_2^{42})^\top \\
 0&  0& -{\rm diag}(\tau_i^2 \Omega_{i})_{i=1}^q & 0\\
 *& 
*&0 & \hat B^\top   \Gamma_{\Sigma,\tau} \hat B 
\end{bmatrix}
\end{align*}
with
$M_2^{12}= \begin{bmatrix}
 \hat A^\top  \Gamma_{\Sigma,\tau} \hat Z_1 &  \ldots & \hat A^\top  \Gamma_{\Sigma,\tau}\hat Z_q 
\end{bmatrix}, \\
M_2^{42}=\begin{bmatrix}
\hat B^\top   \Gamma_{\Sigma,\tau} \hat Z_1 &  \ldots & \hat B^\top   \Gamma_{\Sigma,\tau} \hat Z_q 
 \end{bmatrix}$,
and
\begin{align*}
M_{3,\tau}&=\begin{bmatrix}
 -\Gamma_{\Sigma,1}+ \Omega_{\Sigma,\tau} & \begin{bmatrix}
\Gamma_1 & \ldots & \Gamma_q 
\end{bmatrix} & 0 & 0 \\
 *& -\mathrm{diag}(\Gamma_i)_{i=1}^q & 0 & 0 \\
 0 & 0& 0 & 0 \\
 0 & 0 &   0 & 0\\
\end{bmatrix},\\
 M_{4}&=-3\begin{bmatrix}
\Gamma_{\Sigma,1} & \!\!\!\!\begin{bmatrix}
 \Gamma_1 & \!\!\!\!\ldots \!\!\!\!& \Gamma_q 
\end{bmatrix}&\!\!\!\! -2\begin{bmatrix}
\Gamma_1& \!\!\!\!\ldots \!\!\!\!&\Gamma_q  
\end{bmatrix}& \!\!\!\!0\\
* & \mathrm{diag}(\Gamma_i)_{i=1}^q   & -2\mathrm{diag}( \Gamma_i)_{i=1}^q & \!\!\!\!0 \\
*& * & 4\mathrm{diag}(\Gamma_i)_{i=1}^q& \!\!\!\!0\\
0& 0& 0 & \!\!\!\!0 
\end{bmatrix},\\
M_5&=-{\rm sym}(\begin{bmatrix}
    0&\ldots&0&I_m
\end{bmatrix}^\top\mathcal{C}).  
\end{align*}
\end{theorem}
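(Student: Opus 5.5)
The approach is the standard Lyapunov--Krasovskii one. For a solution $x$ of \eqref{eq:pH_delay_simple} with history $\phi\in C^1([-\tau_q,0],\mathbb{R}^n)$, we differentiate $t\mapsto\mathcal{S}(x_t)$ along the trajectory. The goal is the pointwise estimate
\[
\tfrac{\mathrm{d}}{\mathrm{d}t}\mathcal{S}(x_t)-y_{\rm p}(t)^\top u(t)\le \zeta_q(t)^\top\Big(2{\rm sym}(G_2^\top\mathcal{P}G_1)+M_1+M_{2,\tau}+M_{3,\tau}+M_4+M_5\Big)\zeta_q(t)\le 0 .
\]
Integrating from $t_0$ to $t_1$ then gives \eqref{eq:dissip_ineq_cont_time} with the passive output $y_{\rm p}=\mathcal{C}\zeta_q$. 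Nonnegativity of $\mathcal{S}$ and $\mathcal{S}(0)=0$ are immediate from $\mathcal{P},\Theta_i,\Omega_i,\Gamma_i\ge0$.

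The terms are handled one at a time.

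\emph{First term.} Observe that $\xi_q(t)=G_1\zeta_q(t)$. Moreover $\frac{\mathrm{d}}{\mathrm{d}t}\int_{-\tau_i}^0x_t(s)\,\mathrm{d}s=x(t)-x(t-\tau_i)$, and combined with the state equation this gives $\dot\xi_q(t)=G_2\zeta_q(t)$. Hence the derivative of $\xi_q^\top\mathcal{P}\xi_q$ equals $\zeta_q^\top 2{\rm sym}(G_2^\top\mathcal{P}G_1)\zeta_q$.

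\emph{$\Theta_i$-terms.} Their derivative is $x(t)^\top\Theta_ix(t)-x(t-\tau_i)^\top\Theta_ix(t-\tau_i)$, which produces $M_1$.

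\emph{Double integrals.} By Leibniz' rule,
\[
\tau_i\tfrac{\mathrm{d}}{\mathrm{d}t}\int_{t-\tau_i}^t\int_s^t\dot x^\top\Gamma_i\dot x=\tau_i^2\dot x(t)^\top\Gamma_i\dot x(t)-\tau_i\int_{t-\tau_i}^t\dot x^\top\Gamma_i\dot x ,
\]
and analogously for the $\Omega_i$-terms. In the positive part we insert $\dot x(t)=\hat Ax(t)+\sum_i\hat Z_ix(t-\tau_i)+\hat Bu(t)$, which yields exactly the quadratic form $\zeta_q^\top(\ast)\zeta_q$ built from $\Gamma_{\Sigma,\tau}$, i.e.\ the blocks of $M_{2,\tau}$. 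The positive $\Omega$ part $x(t)^\top\Omega_{\Sigma,\tau}x(t)$ goes to $M_{3,\tau}$.

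For the negative $\Gamma$-integral we apply Lemma~\ref{Lemma:Wirtinger} on $[a,b]=[t-\tau_i,t]$ with $\omega=x$. This is where the semidefinite version of the lemma is essential, since the $\Gamma_i$ may be singular. The first Wirtinger term gives the $(x(t)-x(t-\tau_i))^\top\Gamma_i(\cdot)$ blocks of $M_{3,\tau}$. The second uses $\tilde\omega=x(t)+x(t-\tau_i)-2\cdot\tfrac1{\tau_i}\int_{-\tau_i}^0x_t$, and expanding $-3\tilde\omega^\top\Gamma_i\tilde\omega$ in the coordinates of $\zeta_q$ reproduces $M_4$. Here the $1/\tau_i$ normalisation in $\zeta_q$ accounts for the factors $-2$ and $4$.

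For the negative $\Omega$-integral we use Jensen's inequality:
\[
\tau_i\int_{t-\tau_i}^t x^\top\Omega_ix\ \ge\ \Big(\int x\Big)^\top\Omega_i\Big(\int x\Big)=\tau_i^2\Big(\tfrac1{\tau_i}\int x\Big)^\top\Omega_i\Big(\tfrac1{\tau_i}\int x\Big).
\]
This is also valid for semidefinite $\Omega_i$, e.g.\ via Cauchy--Schwarz applied to $\Omega_i^{1/2}x$, and it gives the block $-{\rm diag}(\tau_i^2\Omega_i)$ of $M_{2,\tau}$.

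\emph{Supply term.} Finally, $-y_{\rm p}^\top u=-\zeta_q^\top{\rm sym}([0\ \cdots\ 0\ I_m]^\top\mathcal{C})\zeta_q=\zeta_q^\top M_5\zeta_q$.

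The main obstacle is bookkeeping. Every block must land in the correct position, with the correct $\tau_i$-scalings and symmetric off-diagonal placements. I would verify this by writing each contribution as $\zeta_q^\top E^\top\Gamma_i E\,\zeta_q$ for explicit selector matrices $E$ rather than expanding entrywise.

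A second point needs care. The dissipation inequality is claimed with respect to $y$, whereas the computation naturally gives it for the constructed output $y_{\rm p}$. So the statement should be read with $y$ replaced by $y_{\rm p}$, or one specialises $\mathcal{C}$ so that $y_{\rm p}=\hat B^\top x$.

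Regularity also has to be checked. For $C^1$ histories, $x$ is $C^1$ on $[-\tau_q,\infty)$ except possibly at $t=0$, and the functional is absolutely continuous. The integrated inequality therefore holds even if the pointwise derivative fails at isolated points.
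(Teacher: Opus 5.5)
Your proposal is correct and follows the paper's proof essentially step for step. You use $\xi_q=G_1\zeta_q$ and $\dot\xi_q=G_2\zeta_q$ for the $\mathcal{P}$-term, the Leibniz rule for the $\Theta_i$- and double-integral terms, Lemma~\ref{Lemma:Wirtinger} for the $\Gamma_i$-integral, Jensen's inequality for the $\Omega_i$-integral, and then integrate over $[t_0,t_1]$. Your remark that the resulting dissipation inequality holds with respect to $y_{\rm p}=\mathcal{C}\zeta_q$ is also accurate: $M_5$ is built from $\mathcal{C}$, even though the paper's proof writes the supply rate as $y^\top u$.
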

\begin{proof}
By considering the storage function \eqref{eq:storage_fixed_delay_PH}, we have
\begin{align}
   \frac{\rm d}{{\rm d}t} \mathcal{S}\big(x_{[t-\tau_q,\,t]}\big)&=
       2\xi_q(t)^\top \mathcal{P} \dot{\xi}_q(t)\label{eq:P_terms}\\
   &~~~~+ \sum_{i=1}^q \tfrac{\rm d}{{\rm d}t}\int_{t-\tau_i}^tx(s)^\top \Theta_i x(s)\mathrm{d}s \label{eq:Theta_term}\\
 &~~~~+  \sum_{i=1}^q \tau_i \tfrac{\rm d}{{\rm d}t} \int_{t-\tau_i}^t\int_s^t x(v)^\top \Omega_i x(v)\mathrm{d}v\mathrm{d}s \label{eq:Omega_term}\\
&~~~~+ \sum_{i=1}^q \tau_i \tfrac{\rm d}{{\rm d}t} \int_{t-\tau_i}^t\int_s^t \dot{x}(v)^\top \Gamma_i \dot{x}(v)\mathrm{d}v\mathrm{d}s. \label{eq:R_term}
    \end{align}
    We have
$\xi_q(t)=G_1\zeta_q(t),$
\begin{align*}\dot{\xi}_q(t)
&=\begin{bmatrix}
    \hat Ax(t)+\sum_{i=1}^q\hat Z_ix(t-\tau_i)+\hat Bu(t)\\ x(t)-x(t-\tau_1)\\ \vdots\\  x(t)-x(t-\tau_q)
\end{bmatrix}
=G_2
\zeta_q(t).
\end{align*}
Using the Leibniz rule for \eqref{eq:Theta_term}
and
\begin{align*}
&~~~~\sum_{i=1}^q  \tfrac{\rm d}{{\rm d}t}\int_{t-\tau_i}^t x(s)^\top \Theta_i x(s)\mathrm{d}s\\&=x(t)^\top \sum_{i=1}^q \Theta_i  x(t)- \sum_{i=1}^q x(t-\tau_i)^ \top \Theta_i x(t-\tau_i).
\end{align*}
For \eqref{eq:R_term}, we interchange the order of integration first
\begin{align*}
&~~~~~\sum_{i=1}^q \tau_i \int_{t-\tau_i}^t\int_s^t\dot{x}(v)^\top \Gamma_i \dot{x}(v) \mathrm{d}v\mathrm{d}s\\&=\sum_{i=1}^q \tau_i \int_{t-\tau_i}^t\int_{t-\tau_i}^v\dot{x}(v)^\top \Gamma_i \dot{x}(v)\mathrm{d}s\mathrm{d}v\\&=\sum_{i=1}^q \tau_i \int_{t-\tau_i}^t(v-(t-\tau_i))\dot{x}(v)^\top \Gamma_i \dot{x}(v)\mathrm{d}v
\end{align*}
and we apply Leibniz rule 
\begin{align}\nonumber
&~~~~\sum_{i=1}^q \tau_i   \tfrac{\rm d}{{\rm d}t} \int_{t-\tau_i}^t\int_s^t \dot{x}(v)^\top \Gamma_i \dot{x}(v) \mathrm{d}v\mathrm{d}s\nonumber \\&=\sum_{i=1}^q \tau_i   \tfrac{\rm d}{{\rm d}t} \int_{t-\tau_i}^t(v-(t-\tau_i))\dot{x}(v)^\top \Gamma_i \dot{x}(v)\mathrm{d}v \nonumber\\
&=\sum_{i=1}^q \tau_i^2 \dot{x}(t)^\top  \Gamma_i  \dot{x}(t)
-\sum_{i=1}^q \tau_i\int_{t-\tau_i}^t\dot{x}(s)^\top \Gamma_i  \dot{x}(s)\mathrm{d}s \label{eq:deriv_R_term}.
\end{align}
To estimate the integral in~\eqref{eq:deriv_R_term}, we use Wirtinger's inequality from Lemma~\ref{Lemma:Wirtinger}
\begin{align*}\nonumber
&~~~~\tau_i\int_{-\tau_i}^{0}  \dot{x}_t(u)^\top \Gamma_i \dot{x}_t(u)du \\&\geq  \hat \xi_i(t)^\top \begin{bmatrix}
    I_n & -I_{n} &0 & 0
\end{bmatrix}^\top \Gamma_i  \begin{bmatrix}
    I_n & -I_{n} &0 & 0
\end{bmatrix}\hat \xi_i(t)\\
&+3
\hat \xi_i(t)^\top\begin{bmatrix}
    I_n & I_{n} &-2I_{n} & 0
\end{bmatrix}^\top
\Gamma_i\begin{bmatrix}
    I_n & I_n &-2I_{n} & 0
\end{bmatrix}\hat\xi_i(t)
\end{align*}
and using $\hat  \xi_i(t)=\begin{bmatrix}
x(t)^\top & x(t-\tau_i))^\top &  \tfrac{1}{\tau_i}(\int_{t-\tau_i}^{t} x(s)\,ds
)^\top & u(t)^\top\end{bmatrix}^\top$ yields
{\begin{align*}\nonumber 
& ~~~~\tau_i \frac{\rm d}{{\rm d}t}\int_{t-\tau_i}^t\int_s^t \dot{x}(v))^\top \Gamma_i\dot{x}(v)\mathrm{d}v\mathrm{d}s \\& \leq \hat \xi_i(t)^\top \begin{bmatrix}
   \hat{A} & \sum_{i=1}^q \hat{Z_i}  & 0 & \hat{B}
\end{bmatrix}^\top  \tau_i^2\Gamma_i  \begin{bmatrix}
    \hat{A}&   \sum_{i=1}^q\hat{Z_i} & 0 & \hat{B}
\end{bmatrix}\hat \xi_i(t)\\
&-  \hat \xi_i(t)^\top  \begin{bmatrix}
    I_n& -I_{n} &0 & 0
\end{bmatrix}^\top \Gamma_i \begin{bmatrix}
    I_n & -I_{n} &0 & 0
\end{bmatrix}\hat \xi_i(t)\\
&- 3\sum_{i=1}^q \hat \xi_i(t)^\top \begin{bmatrix}
    I _n& I_{n} &-2I_{n} & 0
\end{bmatrix}^\top
\Gamma_i
\begin{bmatrix}
    I_n & I_{n} &-2I_{n} & 0
\end{bmatrix}\hat \xi_i(t).
\nonumber
\end{align*}}
Similarly, we can rewrite and estimate the term \eqref{eq:Omega_term} using Jensen's inequality
{\small\begin{align*}
&~~~~\sum_{i=1}^q \tau_i   \tfrac{\rm d}{{\rm d}t} \int_{t-\tau_i}^t\int_s^t x(v)^\top \Omega_i x(v) \mathrm{d}v\mathrm{d}s\nonumber \\&
=\sum_{i=1}^q\tau_i^2 x(t)^\top \Omega_i x(t)-\sum_{i=1}^q\tau_i\int_{t-\tau_i}^t x(s)^\top \Omega_i x(s)\mathrm{d}s\\
& \leq \sum_{i=1}^q\tau_i^2 x(t)^\top \Omega_i x(t)- \sum_{i=1}^q\left(\int_{t-\tau_i}^t x(s)\mathrm{d}s\right)^\top \Omega_i\int_{t-\tau_i}^t x(s)\mathrm{d}s.
\end{align*}}
Summarizing, this leads to 
\begin{align*}
& ~~~~ \frac{\rm d}{{\rm d}t} \mathcal{S}\big(x_{[t-\tau_q,\,t]}\big)\\&\leq \zeta_q(t)^\top G_1^\top \mathcal{P}G_2 \zeta_q(t)+\zeta_q(t) G_2^\top \mathcal{P}G_1\zeta_q(t)\\
&+\sum_{i=1}^q \hat\xi_i(t)^\top \begin{bmatrix}
  I_n &0&0&0  
\end{bmatrix}^\top \Theta_i  \begin{bmatrix}
  I_n &0&0&0  
\end{bmatrix} \hat \xi_i(t)\\&- \sum_{i=1}^q \hat \xi_i(t)^\top \begin{bmatrix}
   0 & I_{n} & 0 & 0 
\end{bmatrix}^\top \Theta_i  \begin{bmatrix}
   0 & I_{n} & 0 & 0 
\end{bmatrix} \hat \xi_i(t)\\
&+\sum_{i=1}^q \tau_i^2    \hat \xi_i(t)^\top \begin{bmatrix}
       I_n&0& 0 & 0
\end{bmatrix}^\top  \Omega_i  \begin{bmatrix}
    I_n&   0& 0 & 0
\end{bmatrix}\hat  \xi_i(t)\\
&-\sum_{i=1}^q \tau_i^2    \hat \xi_i(t)^\top \begin{bmatrix}
       0&0& I_{n} & 0
\end{bmatrix}^\top  \Omega_i  \begin{bmatrix}
    0&   0& I_{n} & 0
\end{bmatrix}\hat  \xi_i(t)\\
&-\sum_{i=1}^q \hat \xi_i(t)^\top \begin{bmatrix}
    I_n & -I_{n} &0 & 0
\end{bmatrix}^\top \Gamma_i\begin{bmatrix}
    I_n & -I_{n} &0 & 0
\end{bmatrix}\hat \xi_i(t)\\
&+\sum_{i=1}^q \tau_i^2    \hat \xi_i(t)^\top \begin{bmatrix}
        \hat A& \sum_{i=1}^q \hat Z_i& 0 & \hat B
\end{bmatrix}^\top  \Gamma_i  \begin{bmatrix}
    \hat A &   \sum_{i=1}^q \hat Z_i & 0 & \hat B
\end{bmatrix}\hat  \xi_i(t)\\
&-3\sum_{i=1}^q  \hat \xi_i(t)^{ \top} \begin{bmatrix}
I_n & I_{n} &-2I_{n} & 0
\end{bmatrix}
^\top
\Gamma_i
\begin{bmatrix}
    I_n & I_{n} &-2I_{n} & 0
\end{bmatrix}\hat \xi_i(t)\\
&+y(t)^\top u(t)-y(t)^\top u(t)\\
&=
 \zeta_q(t)^\top \text{$\left[2\rm sym(G_1^\top \mathcal{P}G_2)+ M_1  +  M_{2,\tau}+ M_{3,\tau}+M_{4}+M_5\right]$}\zeta_q(t)\\
&\leq y(t)^\top u(t).
\end{align*}
Using \eqref{eq:pH_NLMI}, we conclude that 
\begin{equation}\label{eq:pH_inequality}
\frac{d}{dt} \mathcal{S}\big(x_{[t-\tau_q,\,t]}\big) \leq y(t)^\top u(t).
\end{equation}
Finally, integrating \eqref{eq:pH_inequality} over the interval $[t_0,t_1]$ yields the dissipation inequality. 
\end{proof}

\begin{remark}
\label{rem:passivity}
\begin{itemize}
    \item[(a)] If we choose $\Gamma_i=0$, $\Omega_i=0$ and $\mathcal{P}={\rm diag}(Q,0,\ldots,0)$ then we recover the storage function that was used for delay-independent passivity analysis in \cite{BreitHU24}. If we use the colocated output $y(t)=\hat B^\top x(t)$, then looking at the entry $\hat B^\top \Gamma_{\Sigma,\tau} \hat B$ in the block matrix $M_{2,\tau}$ gives a restriction on the coefficient matrices $\Gamma_i$ of the form $\ran B\subseteq \bigcap_{i=1}^q\ker\Gamma_i$. 
\item[(b)]The domain for the delays for which passivity holds can be improved by using the Bessel–Legendre inequality \cite{seuret2015hierarchy} instead of the Wirtinger inequality. This leads to an LMI similar to~\eqref{eq:pH_NLMI} with larger block matrices. 
\end{itemize}
\end{remark}

\subsection{Delay-independent passivity}

In this section we study the delay-independent passivity
using the storage function
\begin{align}
\label{eq:storage_independent}
\mathcal{S}(x_t)=x(t)^\top  P x(t)
 +\sum_{i=1}^q \int_{t-\tau_i}^t x(s)^\top \Theta_i x(s) \,\mathrm{d}s
\end{align}
for some positive semi-definite matrices $P,\Theta_1,\ldots,\Theta_q\in\mathbb{R}^{n\times n}$.

As a special case of Theorem~\ref{thm:delay_passive}, we obtain a sufficient condition for delay-independent passivity of pH systems \eqref{eq:pH_delay_simple}.

\begin{corollary}
\label{cor:independent}
   The pH system with delay terms~\eqref{eq:pH_delay_simple} is passive independently of the delay with the storage function~\eqref{eq:storage_independent} if there exist positive semi-definite matrices $P,\Theta_1,\ldots,\Theta_q\in\mathbb{R}^{n\times n}$ such that 
\[\begin{bmatrix}
      \sum_{i=1}^q \Theta_i-\text{sym}(P\hat{R}) & \tfrac{1}{2}P\begin{bmatrix}
  \hat Z_1  &\ldots  &\hat Z_q
\end{bmatrix}  \\ 
   * & -\mathrm{diag}(\Theta_i)_{i=1}^q  \\
\end{bmatrix}\leq 0.\]
\end{corollary}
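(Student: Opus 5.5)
The plan is to obtain the corollary as the special case of Theorem~\ref{thm:delay_passive} in which all integral-type terms of the storage functional except the $\Theta_i$-terms are switched off. Concretely, I would choose $\Gamma_i=0$ and $\Omega_i=0$ for $i=1,\ldots,q$, and $\mathcal{P}=\mathrm{diag}(P,0,\ldots,0)$ (possibly rescaled by a factor $\tfrac12$ to match the normalisation of the LMI). Then \eqref{eq:storage_fixed_delay_PH} reduces exactly to \eqref{eq:storage_independent}, in line with Remark~\ref{rem:passivity}(a).

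With this choice, $M_{3,\tau}=0$ and $M_4=0$, and $M_{2,\tau}$ reduces to the zero matrix. In particular, the blocks $-\mathrm{diag}(\tau_i^2\Omega_i)$ and all $\Gamma_{\Sigma,\tau}$ entries vanish, so \emph{no} $\tau_i$ appears anywhere in \eqref{eq:pH_NLMI}. Since the factor $\mathrm{diag}(\tau_iI_n)$ in $G_1$ only multiplies the zero blocks of $\mathcal{P}$, the term $2\,\mathrm{sym}(G_2^\top\mathcal{P}G_1)$ is also $\tau$-independent. Hence any solution of the resulting LMI works for every choice of delays, which is exactly passivity independent of the delay. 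Moreover, the rows and columns of $\zeta_q$ belonging to the averaged integrals are identically zero and can be deleted.

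Next I would compute $2\,\mathrm{sym}(G_2^\top\mathcal{P}G_1)$ blockwise in the reduced variable $(x(t),x(t-\tau_1),\ldots,x(t-\tau_q),u(t))$. The $(1,1)$ block is $2\,\mathrm{sym}(P\hat A)=-2\,\mathrm{sym}(P\hat R)+2\,\mathrm{sym}(P\hat J)$. The $(1,i+1)$ block is $P\hat Z_i$, and the $(1,u)$ block is $P\hat B$. Adding $M_1$ produces the $\Theta$-entries of the stated matrix.

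The $u$-row is eliminated through $M_5$ by choosing the passive output appropriately. I would take $\mathcal{C}=[\,\hat B^\top P\ 0\ \cdots\ 0\,]$ (suitably scaled), i.e.\ $y_{\rm p}=\hat B^\top Px$. This is the collocated output for the storage $x^\top Px$ and coincides with $y$ in the standard normalisation. With this choice $M_5$ cancels the $P\hat B$ cross terms, and the $(u,u)$ block is zero. A Schur-type argument is then unnecessary, since the $u$-row vanishes entirely. Dividing by $2$ leaves precisely the matrix of the corollary, with $\tfrac12 P[\hat Z_1\ \cdots\ \hat Z_q]$ in the off-diagonal block.

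The main obstacle I expect is the skew part $\mathrm{sym}(P\hat J)$, which does not appear in the stated LMI. I would first try to show that it drops out of the quadratic form. It does so automatically when $P$ commutes with the energy-conserving structure, since then $x^\top P\hat Jx=0$. For general $P\geq0$, however, this term must be shown to be harmless or absorbed, and I would check carefully whether this requires restricting to $P$ for which $P\hat J$ is skew-symmetric. Once the $\hat J$-term is dealt with, the conclusion follows from Theorem~\ref{thm:delay_passive} by integrating \eqref{eq:pH_inequality}.
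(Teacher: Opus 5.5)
Your route is the one the paper intends: the corollary is only asserted as a special case of Theorem~\ref{thm:delay_passive} (cf.\ Remark~\ref{rem:passivity}(a)). Your observation that with $\Gamma_i=\Omega_i=0$ and $\mathcal P=\mathrm{diag}(P,0,\ldots,0)$ no $\tau_i$ survives in \eqref{eq:pH_NLMI} is correct. The trouble lies in the two steps you leave open or pass over quickly. Neither can be closed, because the statement as written is missing hypotheses.

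\textbf{The skew term.} Your suspicion is justified. Take $\mathcal P=\mathrm{diag}(\tfrac12P,0,\ldots,0)$; this gives the stated scaling directly, whereas dividing by $2$ as you propose also halves the $\Theta_i$-blocks. The $(1,1)$ block is then $\mathrm{sym}(P\hat J)-\mathrm{sym}(P\hat R)+\sum_i\Theta_i$. The matrix $\mathrm{sym}(P\hat J)$ is in general indefinite and is not controlled by anything in the hypothesis, so it can be neither dropped nor absorbed. For a counterexample, take $n=2$, $\hat J=\left[\begin{smallmatrix}0&1\\-1&0\end{smallmatrix}\right]$, $\hat R=\epsilon I_2$ with $0<\epsilon<\tfrac13$, $\hat Z_1=0$, $P=\mathrm{diag}(2,1)$ and $\Theta_1=0$. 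The corollary's matrix is $\mathrm{diag}(-\epsilon P,0)\le 0$. Yet for $u\equiv0$ and the constant history $(1,1)^\top$,
\[
\tfrac{\mathrm d}{\mathrm dt}\,x^\top Px=2x_1x_2-4\epsilon x_1^2-2\epsilon x_2^2=2-6\epsilon>0 \quad\text{at } t=0,
\]
so the storage increases while the supply is zero. Hence $P\hat J=\hat JP$ must be assumed, or $\mathrm{sym}(P\hat A)$ must be kept in the LMI.

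\textbf{The output.} With $\Gamma_i=0$ and no feedthrough, the $(u,u)$ block of the specialized LMI is $0$. Negative semidefiniteness then forces the $(x,u)$ block $\tfrac12(P\hat B-C^\top)$ to vanish, i.e.\ $C=\hat B^\top P$. You are not free to choose $\mathcal C$, because the output of \eqref{eq:pH_delay_simple} is fixed as $y=\hat B^\top x$. Your choice proves passivity for $y_{\rm p}=\hat B^\top Px$, which equals $y$ only if $P\hat B=\hat B$. ``The standard normalisation'' does not follow from the hypothesis.

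For a counterexample, take $n=m=q=1$, $\hat J=0$, $\hat R=1$, $\hat Z_1=0$, $\hat B=1$, $\Theta_1=0$ and $P=0$. The LMI holds and the storage is identically zero. But for $x(0)=1$ and $u\equiv-1$ one has $\int_0^{t_1}y(s)u(s)\,\mathrm ds<0$ for small $t_1>0$.

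\textbf{What your argument does prove.} If
\[
\left[\begin{smallmatrix}\mathrm{sym}(P\hat A)+\sum_i\Theta_i&\frac12P[\hat Z_1\,\cdots\,\hat Z_q]\\ *&-\mathrm{diag}(\Theta_i)\end{smallmatrix}\right]\le0,
\]
then the system is passive independently of the delay with respect to $y_{\rm p}=\hat B^\top Px$, with storage $\tfrac12x^\top Px+\sum_i\int_{t-\tau_i}^t x(s)^\top\Theta_ix(s)\,\mathrm ds$. The corollary as stated follows only under the extra assumptions $P\hat J=\hat JP$ and $P\hat B=\hat B$, for example $P=I_n$. The paper offers no argument beyond calling the corollary a special case of the theorem, so this defect lies in the statement itself, not only in your write-up.
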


Note that in \cite{BreitHU24} a similar result for pH delay systems \eqref{eq:pH_delay} but with choosing $P=H^{-1}$ where $H$ is the Hamiltonian weight in \eqref{eq:pH_delay}. Allowing the choice of a positive semi-definite weight $P$ that is not related to the Hamiltonian can make a difference, since the delay-independent passivity condition requires the $R$ in the pH system representation to be positive definite. This is problematic when the Hamiltonian weight $H$ does not fulfill a Lyapunov equation. 

However, if $(J-R)H$ in \eqref{eq:pH_delay} is Hurwitz, then we can find a coordinate transformation $\hat H$ in such a way that $\hat R$ in \eqref{eq:pH_delay_simple} is positive definite. Indeed, consider a positive definite solution $\hat H$ to the Lyapunov inequality
\[
\hat H (J-R)H +((J-R)Q)^\top \hat H\leq -2\hat H.
\]
This inequality is equivalent to 
\[
\hat H^{\tfrac{1}{2}} (J-R)H\hat H^{-\tfrac{1}{2}} +\hat H^{-\tfrac{1}{2}}((J-R)H)^\top \hat H^{\tfrac{1}{2}}\leq -2 I_n.
\]
After coordinate transformation $\hat x=\hat H^{\tfrac{1}{2}}x$ we obtain a pH system \eqref{eq:pH_delay_simple}
\begin{align*}
\hat J&=\tfrac{1}{2}(\hat H^{1/2} (J-R)H\hat H^{-\tfrac{1}{2}}-(\hat H^{\tfrac{1}{2}} (J-R)H\hat H^{-\tfrac{1}{2}})^\top),\\ \hat R&=-\tfrac{1}{2}(\hat H^{\tfrac{1}{2}} (J-R)H\hat H^{-\tfrac{1}{2}}+(\hat H^{\tfrac{1}{2}} (J-R)H\hat H^{-\tfrac{1}{2}})^\top).
\end{align*}
Since this transformed system has Hamiltonian the Hamiltonian weight $H=I_n$ we recover the conditions from \cite{BreitHU24} by choosing  $P=I_n$ leading to the delay-independent passivity. On the other hand, this passivity can be verified directly based on Corollary~\ref{cor:independent} without a change of coordinates by a suitable choice of $P$ as the solution to a Lyapunov equation.

\subsection{Passivity implies positive real transfer function}
In this section, we study the transfer function of linear delay systems of the form
\begin{align}
\label{eq:linear_delay}
    \dot x(t)&=A_0 x(t)+\sum_{i=1}^qA_i x(t-\tau_i)+Bu(t), \quad t\geq 0,\\\nonumber 
    y(t) &=C x(t).\qquad
\end{align}
Then the transfer function is given by 
\begin{align}
\label{eq:transfer_function}    
G(s)=C\left(s I_n -A_0-\sum_{i=1}^q{\rm e}^{-\tau_i s}A_i\right)^{-1}B.
\end{align}

Is well known for LTI systems that passivity implies a~particular property of the transfer function being positive real \cite{CherifiGernandtHinsen2024}. We recall this property.
\begin{definition}
Let $G_\tau(s)$ be a function with values in $\mathbb{C}^{n\times n}$.  Then $G_\tau (s)$
is called \emph{positive real} if $G_{\tau}(s)$ has no poles in the right half-plane $\mathbb{C}_+:=\{ s \in \mathbb{C}: \mathrm{Re}(s)>0\}$ 
and $G_\tau(s)+G_\tau(s)^*\geq 0,$ for all $s\in \mathbb{C}_+. $
\end{definition}

In the following proposition, we show that the delay-dependent passivity implies positive realness of the transfer function. This result follows immediately by adapting the result for single delay systems  \cite[Proposition 4.5]{Fridman2014} to multiple delays.
\begin{proposition}
\label{prop:positive real}
If the linear delay system \eqref{eq:linear_delay} 
is passive for some delays $\tau_q\geq \ldots\geq \tau_1>0$ then the transfer function $G$ given by \eqref{eq:transfer_function} is positive real and 
for all $\omega>0$ for which 
\begin{align}
    \label{eq:characteristic}
\det \left(s I_n -A_0-\sum_{i=1}^q{\rm e}^{-\tau_i s}A_i\right)=0,
\end{align} with $s=i\omega$, 
we have that $G(-i\omega)^\top+G(i\omega)\geq 0$.
\end{proposition}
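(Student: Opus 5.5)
Our approach adapts the argument of \cite[Proposition 4.5]{Fridman2014} to several delays. We use the dissipation inequality \eqref{eq:dissip_ineq_cont_time} with zero initial history $\phi=0$, so that $\mathcal{S}(x_{0})=\mathcal{S}(0)=0$. Integrating from $0$ to $T$ and using $\mathcal{S}\geq 0$ gives
\[
\int_0^T y(s)^\top u(s)\,\mathrm{d}s\geq 0 \qquad\text{for all } T\geq 0
\]
and all admissible inputs. We then test this inequality with exponentially growing complex inputs. We work with real and imaginary parts, and use linearity and the fact that the inequality holds for $u=\mathrm{Re}(\cdot)$ of any complex exponential.

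\textbf{Step 1 (right half-plane).} Fix $s_0=\sigma+i\omega$ with $\sigma>0$ that is not a root of \eqref{eq:characteristic}, and fix $u_0\in\mathbb{C}^m$. Take $\tilde u(t)=\mathrm{Re}(u_0 e^{s_0 t})$. The forced response splits as
\[
x(t)=\mathrm{Re}\big((s_0I_n-A_0-\textstyle\sum_i e^{-\tau_i s_0}A_i)^{-1}Bu_0\, e^{s_0t}\big)+x_{\rm h}(t),
\]
where $x_{\rm h}$ solves the homogeneous delay equation with history equal to minus the exponential part on $[-\tau_q,0]$. To avoid the issue of the nonzero initial history, we instead multiply the input by a smooth cutoff, or equivalently compare growth rates: $x_{\rm h}$ grows at most like $e^{\alpha t}$, where $\alpha$ is the spectral abscissa. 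We first consider $\sigma>\alpha$, where the exponential part dominates. Then $\int_0^T y^\top u\,\mathrm{d}s$ is asymptotically $\tfrac{e^{2\sigma T}}{4\sigma}\big(\bar u_0^\top(G(s_0)+G(s_0)^*)u_0\big)$ plus oscillating terms of the same order. The oscillating terms have the form $\mathrm{Re}(c\,e^{2s_0T})/(2s_0)$.

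\textbf{Step 2 (averaging).} Averaging over the phase of $u_0$ (replace $u_0$ by $e^{i\theta}u_0$ and integrate over $\theta\in[0,2\pi]$) removes the oscillating terms. This yields $\bar u_0^\top(G(s_0)+G(s_0)^*)u_0\geq 0$ for $\mathrm{Re}\, s_0>\alpha$.

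\textbf{Step 3 (absence of poles).} We need to show that $G$ has no poles in $\mathbb{C}_+$. Suppose $G$ had a pole $p$ with $\mathrm{Re}\,p>0$. Then $G$ is meromorphic on $\{\mathrm{Re}\, s>0\}$, and Step 2 shows it has nonnegative Hermitian real part on $\{\mathrm{Re}\, s>\alpha\}$. We would argue via a Laurent expansion at $p$: near a pole, $\mathrm{Re}\,\bar u_0^\top G u_0$ takes arbitrarily negative values. Running Step 1 with $s_0$ near $p$ and $\sigma$ slightly larger than the growth of the homogeneous part gives a contradiction.

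This is the main obstacle, since one must control $x_{\rm h}$ when $\alpha\geq\sigma$. The alternative I would try first is to show directly that passivity with $\mathcal{S}\ge0$ forces the relevant modes to be uncontrollable or unobservable, so that they cancel in $G$. Concretely, apply the dissipation inequality to inputs supported on $[0,T_0]$ followed by $u=0$: then $\mathcal{S}$ is nonincreasing along free motion, which bounds the growth of the observed part. Once no poles remain in $\mathbb{C}_+$, positive realness extends from $\{\mathrm{Re}\,s>\alpha\}$ to all of $\mathbb{C}_+$ by analytic continuation, because the inequality in Step 2 holds for every $\sigma>0$.

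\textbf{Step 4 (imaginary axis).} For the final claim at $s=i\omega$, we use continuity. Let $\sigma\downarrow 0$ in $G(\sigma+i\omega)+G(\sigma+i\omega)^*\geq 0$ at points where $G$ extends continuously. Since $A_0,A_i,B,C$ are real, $G(-i\omega)^\top=\overline{G(i\omega)}^{\,\top}=G(i\omega)^*$, which gives $G(-i\omega)^\top+G(i\omega)\geq 0$. At frequencies satisfying \eqref{eq:characteristic}, the statement must be read for the boundary value of the (cancelled) rational-exponential expression. The limit argument applies there as long as the singularity is removable in $G$, which Step 3 should guarantee.
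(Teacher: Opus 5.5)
There is a genuine gap in Steps 1--3. Because you insist on the zero history, the response always contains the homogeneous part $x_{\rm h}$. Your growth comparison therefore only yields $G(s_0)+G(s_0)^*\geq 0$ for $\mathrm{Re}\,s_0>\alpha$, where $\alpha$ is the spectral abscissa. Passivity with a merely nonnegative storage function does not force $\alpha<0$: take $A_i=0$, $\dot x_1=x_1$, $\dot x_2=-x_2+u$, $y=x_2$, $\mathcal{S}=\tfrac12 x_2^2$. So the strip $0<\mathrm{Re}\,s\leq\alpha$ remains uncovered, and that is precisely where any pole of $G$ in $\mathbb{C}_+$ would lie, since such a pole is a characteristic root. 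Your remedies do not close this strip. Nonnegativity of a real part does not propagate by analytic continuation, and the phrase ``because the inequality in Step 2 holds for every $\sigma>0$'' presupposes what is to be shown. Positivity to the right of the rightmost root cannot even exclude a pole at that root: $1/(s-1)$ has positive real part on $\mathrm{Re}\,s>1$. The idea that $\mathcal{S}$, being nonincreasing along free motion, bounds the observed part is unsupported, because nothing relates a general storage functional to $y$. When $\alpha<0$ your argument is complete, but stability is not a hypothesis.

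The missing idea is to use that the dissipation inequality is required for \emph{all} $C^1$ histories. Let $\Delta(s):=sI_n-A_0-\sum_{i=1}^q e^{-\tau_i s}A_i$, take $s_0\in\mathbb{C}_+$ with $\det\Delta(s_0)\neq0$, and set $X:=\Delta(s_0)^{-1}Bu_0$. Choose the history $\phi(\theta)=\mathrm{Re}(Xe^{s_0\theta})$ and the input $u(t)=\mathrm{Re}(u_0e^{s_0t})$. Then $x(t)=\mathrm{Re}(Xe^{s_0t})$ exactly, with no homogeneous component, and $\mathcal{S}\geq0$ gives $\int_0^T y^\top u\,\mathrm{d}t\geq-\mathcal{S}(\phi)$, a bound independent of $T$. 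The integral equals $\frac{e^{2\sigma T}-1}{8\sigma}\,u_0^*\big(G(s_0)+G(s_0)^*\big)u_0$ plus an oscillating term that changes sign under $u_0\mapsto iu_0$. Adding the two inequalities and dividing by $e^{2\sigma T}$ yields $G(s_0)+G(s_0)^*\geq0$ at every non-root $s_0\in\mathbb{C}_+$, whatever $\alpha$ is. Since $G=C\operatorname{adj}(\Delta)B/\det\Delta$ with $\det\Delta$ entire and not identically zero, positivity now holds on a punctured neighbourhood of every candidate pole, and your Laurent argument excludes poles in $\mathbb{C}_+$. The paper itself gives no argument beyond citing \cite[Proposition 4.5]{Fridman2014}.

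Finally, the hypothesis on $\omega$ in the statement should read $\det\Delta(i\omega)\neq0$; this is how it is used at $\omega=\pi$ in the example that follows. Under that reading your continuity argument in Step 4 is correct. Your claim that Step 3 makes roots on $i\mathbb{R}$ removable is false: Step 3 concerns only the open half-plane, and passive systems can have imaginary-axis poles. For example, $A_0=\left[\begin{smallmatrix}0&1\\-1&0\end{smallmatrix}\right]$, $A_i=0$, $B=C^\top=e_2$ is passive with $\mathcal{S}=\tfrac12\|x\|^2$, and $G(s)=s/(s^2+1)$.
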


The Proposition~\ref{prop:positive real} is useful to conclude that a system is not passive. This is illustrated by the following example of a~pH systems with delay terms~\eqref{eq:pH_delay} which is not passive but for which we find a passive output of the form \eqref{eq:def_passive_output}. 
\begin{example}
Consider the scalar pH system \eqref{eq:pH_delay} with a single delay term, i.e.\ $q=1$ and $\tau_1=1$,
\begin{align}
\begin{split}
        \dot x(t)&=-\frac12 x(t)-x(t-1)+u(t),\\
    y(t)&=x(t).
\end{split}
    \label{eq:delay_example}
\end{align}
The transfer function from $u$ to $y$ is  $G(s)=(s+\frac12+e^{-s})^{-1}$.
We use Proposition~\ref{prop:positive real}  to conclude that the system is not passive. To this end we compute 
\[
    G(i\omega)
    =
    \frac{1}{i\omega+\frac12+e^{-i\omega}}
    =
    \frac{1}{\frac12+\cos\omega+i(\omega-\sin\omega)}.
\]
At $\omega=\pi$, this gives
\[
G(i\pi)+G(-i\pi)
    =
    \frac{\frac12+\cos\pi}
    {
    \left(\frac12+\cos\pi\right)^2+\pi^2
    }
    =
    \frac{-\frac12}{\frac14+\pi^2}
    <0.
\]
Therefore, $G$ is not positive real. Hence, the system \eqref{eq:delay_example} is not passive by Proposition~\ref{prop:positive real}.

We now construct a passive output of the form \eqref{eq:def_passive_output}. Define
\[
    y_{\rm p}(t) =\mathcal{C}\zeta_1(t)=\begin{bmatrix}
        1 &0&-1 &0
    \end{bmatrix} \zeta_1(t)
    =x(t)-\int_{t-1}^{t}x(s)\,\mathrm{d}s .
\]
This corresponds to 
$C=1$, $C_1=0$, $\hat C_1=-1$, and, with
\[
    \zeta_1(t)
    =
    \begin{bmatrix}
        x(t)& x(t-1)& \int_{t-1}^{t}x(s)\,\mathrm{d}s& u(t)
    \end{bmatrix}^\top .
\]
We apply Theorem~\ref{thm:delay_passive}, i.e.\ the delay-dependent
passivity result.
Choose
\[
    \mathcal P
    =
    \begin{bmatrix}
        \frac12&-\frac12\\[1mm]
        -\frac12&\frac12
    \end{bmatrix},
    \qquad
    \Theta_1=0,
    \qquad
    \Gamma_1=0,
    \qquad
    \Omega_1=\frac34 .
\]
Then $\mathcal P\ge0$, $\Theta_1\ge0$, $\Gamma_1\ge0$, and
$\Omega_1\ge0$. The corresponding storage functional is
\begin{align}
\label{eq:storage_example_lmi}
    \mathcal S(x_t)
    &=
    \begin{bmatrix}
        x(t)\\ \int_{t-1}^{t}x(s)\,\mathrm{d}s
    \end{bmatrix}^{\!\top}
    \mathcal P
    \begin{bmatrix}
        x(t)\\ \int_{t-1}^{t}x(s)\,\mathrm{d}s
    \end{bmatrix}
    +
    \frac34
    \int_{t-1}^{t}\int_s^t x(v)^2\,\mathrm{d}v\,\mathrm{d}s.
\end{align}

For the chosen data, the matrices $G_1$ and $G_2$ are
\[
    G_1
    =
    \begin{bmatrix}
        1&0&0&0\\
        0&0&1&0
    \end{bmatrix},
    \qquad
    G_2
    =
    \begin{bmatrix}
        -\frac12&-1&0&1\\
        1&-1&0&0
    \end{bmatrix}.
\]
Since $\Theta_1=\Gamma_1=0$, we obtain for the matrices  $M_{1}$, $M_{2,\tau}$, $M_{3,\tau}$, $M_{4}=0$, and $M_5$ in  
Theorem~\ref{thm:delay_passive} that $M_{4}=0$ and 
\begin{align*}
        M_{1}+M_{2,\tau}+M_{3,\tau}
    &=
    \begin{bmatrix}
        \frac34&0&0&0\\
        0&0&0&0\\
        0&0&-\frac34&0\\
        0&0&0&0
    \end{bmatrix},\\ 2\operatorname{sym}(G_2^\top\mathcal P G_1)&=\begin{bmatrix}
      -\frac32&0&\frac34&\frac12\\
        0&0&0&0\\
        \frac34&0&0&-\frac12\\
        \frac12&0&-\frac12&0
    \end{bmatrix}
\end{align*}
and
\[
    M_5
    =
    -\operatorname{sym}
    \left(
        \begin{bmatrix}
            0&\!\!0&\!\!0&\!\!1
        \end{bmatrix}^{\!\top}
        \mathcal C
    \right)=-\frac{1}{2}\begin{bmatrix}
      0&0&0&1\\
        0&0&0&0\\
        0&0&0&-1\\
        1&0&-1&0
    \end{bmatrix}.
\]
This leads to 
\[
    2\operatorname{sym}(G_2^\top\mathcal P G_1)
    +
    M_{1}+M_{2,\tau}+M_{3,\tau}
    +
    M_5\!=\!\begin{bmatrix}
        -\frac34&\!\!0&\!\!\frac34&\!\!0\\
        0&\!\!0&\!\!0&\!\!0\\
        \frac34&\!\!0&\!\!-\frac34&\!\!0\\
        0&\!\!0&\!\!0&\!\!0
    \end{bmatrix}
    \!\le\!0.
\]
Therefore, all assumptions of Theorem~\ref{thm:delay_passive} are
satisfied. Consequently, the system \eqref{eq:delay_example} is passive
with respect to the modified output $y_{\rm p}$ 
and the storage functional \eqref{eq:storage_example_lmi}. 
This shows that the delayed pH system is not passive for its natural
colocated output $y=x$, but it is passive for the constructed output
$y_{\rm p}$ of the form \eqref{eq:def_passive_output}.
\end{example}

\subsection{Passivity-preserving interconnection}
In this section, we show that the delay-dependent passivity properties are preserved under negative feedback interconnections. For a related result for delay-independent passivity we refer to \cite{BreitHU24}. To this end we consider two passive pH systems with delay terms of the form
{\small\begin{align}\label{eq:Interconnection}
\begin{split}
 \dot{x}_j(t)&= (J_j - R_j)H_jx_j(t)+\sum_{i=1}^q Z_{i,j}H_jx_j(t-\tau_{i,j}) + B_j u_j(t),\\
y_j (t)&= B_j^{\top}H_j x_j(t),\quad j=1,2,
\end{split}
\end{align}}
with storage functions $\mathcal{S}_1$ and $\mathcal{S}_2$, respectively.

The following result shows that passivity is preserved under negative feedback interconnections. For a related result with a slightly different passivity notion, see~ \cite{kawano2023passivity}.

\begin{proposition}
If the systems~\eqref{eq:Interconnection} are passive for a fixed delay $\tau$ with storage functions $\mathcal{S}_1$ and $\mathcal{S}_2$ then the negative feedback interconnection
\[
u_1=v-y_2, \quad u_2=y_1,
\]
with external input $v$ is 
is passive with storage function $\mathcal{S}=\mathcal{S}_1+\mathcal{S}_2$.  
\end{proposition}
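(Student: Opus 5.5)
The plan is to add the two dissipation inequalities and let the interconnection make the cross terms cancel. The interconnected system has state $x=(x_1,x_2)$, input $v$ and output $y=y_1$. Its history segments are $x_t=(x_{1,t},x_{2,t})$, taken on the window $[-\bar\tau,0]$ with $\bar\tau$ the largest delay among the $\tau_{i,j}$. Each $\mathcal{S}_j$ acts on the restriction of $x_{j,t}$ to its own window. As candidate storage function I take $\mathcal{S}(x_t):=\mathcal{S}_1(x_{1,t})+\mathcal{S}_2(x_{2,t})$. It is nonnegative and satisfies $\mathcal{S}(0)=0$, because both summands do.

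The first step is to check that trajectories of the interconnection are trajectories of each subsystem. Fix a $C^1$ history $\phi=(\phi_1,\phi_2)$ and an input $v$, and let $x$ be the solution of the closed loop. Since $u_1=v-B_2^\top H_2x_2$ and $u_2=B_1^\top H_1 x_1$, the closed loop is again a linear delay system of the form \eqref{eq:linear_delay}. Its solutions exist by the method of steps. Moreover, $x_j$ is the solution of the $j$-th system in \eqref{eq:Interconnection} with history $\phi_j$ and input $u_j$ defined by the interconnection law, and these inputs are continuous.

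Applying the dissipation inequality \eqref{eq:dissip_ineq_cont_time} of each subsystem along these inputs then gives, for $0\le t_0\le t_1$,
\[
\mathcal{S}_j(x_{j,t_1})-\mathcal{S}_j(x_{j,t_0})\le\int_{t_0}^{t_1}y_j(s)^\top u_j(s)\,\mathrm{d}s,\qquad j=1,2.
\]
Adding the two inequalities and inserting the interconnection yields
\[
y_1^\top u_1+y_2^\top u_2=y_1^\top v-y_1^\top y_2+y_2^\top y_1=y_1^\top v,
\]
so
\[
\mathcal{S}(x_{t_1})-\mathcal{S}(x_{t_0})\le\int_{t_0}^{t_1}y_1(s)^\top v(s)\,\mathrm{d}s.
\]
This is the dissipation inequality for the interconnection with input $v$ and output $y_1$.

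The main obstacle is the technical requirement in the definition of passivity, which asks for the inequality for \emph{all} admissible inputs and histories. I have to make sure the inputs $u_j$ generated by the feedback lie in the class for which the subsystem inequalities are assumed. They are continuous, being combinations of $v$ and continuous states, so this holds as long as the definition is read for continuous (or locally integrable) inputs.

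A second point concerns differing delay windows. If the definition requires a common domain $C^1([-\bar\tau,0],\mathbb{R}^{n_1+n_2})$, I compose each $\mathcal{S}_j$ with the restriction map to its own window. The dissipation inequality is unaffected by this, since each $\mathcal{S}_j$ only depends on that restriction.
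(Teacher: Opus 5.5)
Your proposal is correct and takes the same route as the paper. You add the two subsystem dissipation inequalities along the closed-loop trajectory and insert $u_1=v-y_2$, $u_2=y_1$, so that the cross terms $y_1^\top y_2$ cancel and leave $\int_{t_0}^{t_1} y_1(s)^\top v(s)\,\mathrm{d}s$. Your remarks on closed-loop well-posedness, on the admissibility of the feedback-generated inputs, and on the differing delay windows are details the paper leaves implicit.
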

\begin{proof}
Since the passivity inequalities hold individually for arbitrary inputs which gives
\begin{align*}
&~~~~\mathcal{S}(x_{t_1})-\mathcal{S}(x_{t_0})\\&=\mathcal{S}_1(x_{t_1}^1)-\mathcal{S}_1(x_{t_0}^1)+\mathcal{S}_2(x_{t_1}^2)-\mathcal{S}_2(x_{t_0}^2) \\ 
&\leq \int_{t_0}^{t_1}y_1(s)^\top u_1(s)\mathrm{d}s+\int_{t_0}^{t_1}y_2(s)^\top u_2(s)\mathrm{d}s\\
&=\int_{t_0}^{t_1}y_1(s)^\top (v-y_2)(s)\mathrm{d}s+\int_{t_0}^{t_1}y_2(s)^\top y_1(s)\mathrm{d}s\\
&=\int_{t_0}^{t_1}y_1(s)^\top v(s)\mathrm{d}s
\end{align*}
which is the desired passivity for the interconnected system which has input $v$ and output $y_1$.
\end{proof}

\section{Stability of delay port-Hamiltonian systems}
\label{sec:stability}
In this section, we recall some characterizations and sufficient conditions on stability of the zero-input system \eqref{eq:linear_delay}, i.e.\ $u=0$.
This system is called \emph{(globally) exponentially stable} if there exist constants $b>0$ and $c\geq 1$ such that for all initial histories $\phi\in C([-\tau,0],\mathbb{R}^n)$ the following estimate holds
\begin{align}
\label{exp_stability}    
\|x(t)\|\leq ce^{-bt}\|\phi\|_\infty \quad \text{for all $t\geq 0$}.
\end{align}
Moreover, the system \eqref{eq:linear_delay} is called \emph{locally asymptotically stable} if there exists $\delta>0$ such that for all initial histories $\phi\in C([-\tau,0],\mathbb{R}^n)$ with $\|\phi\|_\infty<\delta$ we have $\|x(t)\|\rightarrow 0$ as $t\rightarrow\infty$.

The following equivalences regarding the stability of linear delay systems are well known, see \cite[Lemma 5.3 in Ch. 6]{HaleVerduynLunel1993} and \cite[Corollary 2.1]{Fridman2014}. 

\begin{proposition}
\label{prop:stability_equivalence}
For the linear delay system \eqref{eq:linear_delay} the following statements are equivalent:
\begin{itemize}
    \item[\rm (i)] The system \eqref{eq:linear_delay} is locally asymptotically stable.
    \item[\rm (ii)] The system \eqref{eq:linear_delay} is globally exponentially stable.
    \item[\rm (iii)] All roots $s\in\mathbb{C}$ of the characteristic equation~\eqref{eq:characteristic} 
have negative real parts.
\end{itemize}
\end{proposition}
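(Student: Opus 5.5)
We prove the cycle (ii)$\Rightarrow$(i)$\Rightarrow$(iii)$\Rightarrow$(ii). The first implication is immediate from the definitions: if \eqref{exp_stability} holds, then every solution, in particular every solution with $\|\phi\|_\infty<\delta$ for any $\delta>0$, satisfies $\|x(t)\|\le c\mathrm{e}^{-bt}\|\phi\|_\infty\to 0$.

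For (i)$\Rightarrow$(iii) we argue by contraposition using exponential solutions. Suppose $s_0\in\mathbb{C}$ with $\mathrm{Re}(s_0)\ge 0$ solves \eqref{eq:characteristic}, and choose $v\in\mathbb{C}^n\setminus\{0\}$ with
\[
\Big(s_0I_n-A_0-\sum_{i=1}^q\mathrm{e}^{-\tau_i s_0}A_i\Big)v=0 .
\]
Then $x(t)=\varepsilon\mathrm{e}^{s_0t}v$ solves the homogeneous system \eqref{eq:linear_delay} with $u=0$ for the history $\phi(\theta)=\varepsilon\mathrm{e}^{s_0\theta}v$. Since the system has real coefficients, the real part $\mathrm{Re}(\varepsilon\mathrm{e}^{s_0t}v)$ is a real solution. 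Its history has norm at most $\varepsilon\|v\|\max\{1,\mathrm{e}^{-\mathrm{Re}(s_0)\tau_q}\}$, which can be made smaller than any given $\delta$. However, the solution does not tend to zero, because $|\mathrm{e}^{s_0t}|\ge 1$ and $\mathrm{e}^{s_0t}v$ has nonvanishing real part along a sequence $t_k\to\infty$. This contradicts (i).

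For (iii)$\Rightarrow$(ii), which is the main obstacle, we use the Laplace transform, following \cite{HaleVerduynLunel1993}. The characteristic function $\Delta(s)=sI_n-A_0-\sum_i\mathrm{e}^{-\tau_is}A_i$ is entire, and $\det\Delta(s)=s^n+\ldots$ has only finitely many zeros in any right half-plane $\mathrm{Re}(s)\ge -\beta$. The reason is that for $\mathrm{Re}(s)\ge-\beta$ the delay terms are bounded by $\mathrm{e}^{\beta\tau_q}\sum_i\|A_i\|$, so $\Delta(s)$ is invertible once $|s|$ is large, and the zeros are isolated. Hence (iii) yields some $b_0>0$ such that all roots satisfy $\mathrm{Re}(s)<-b_0$. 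Next we write the solution via the fundamental matrix $X(t)$, the solution with $X(0)=I_n$ and $X(t)=0$ for $t<0$, using the variation-of-constants formula
\[
x(t)=X(t)\phi(0)+\sum_{i=1}^q\int_{-\tau_i}^0X(t-\theta-\tau_i)A_i\phi(\theta)\,\mathrm{d}\theta .
\]
It therefore suffices to show $\|X(t)\|\le K\mathrm{e}^{-bt}$ for some $0<b<b_0$. Since $\widehat X(s)=\Delta(s)^{-1}$, inverse Laplace transformation gives
\[
X(t)=\frac{1}{2\pi i}\int_{\gamma-i\infty}^{\gamma+i\infty}\mathrm{e}^{st}\Delta(s)^{-1}\,\mathrm{d}s .
\]
We shift the contour to $\mathrm{Re}(s)=-b$, which is allowed because $\Delta(s)^{-1}$ is holomorphic in between. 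To make the integral converge we split $\Delta(s)^{-1}=s^{-1}I_n+s^{-1}\big(A_0+\sum_i\mathrm{e}^{-\tau_is}A_i\big)\Delta(s)^{-1}$; the first term gives the constant part of $X$ and is treated explicitly, while the remainder is $O(|s|^{-2})$ on vertical lines and hence absolutely integrable. This yields $\|X(t)\|\le K\mathrm{e}^{-bt}$. Inserting this bound into the variation-of-constants formula gives \eqref{exp_stability} with $c=K\big(1+\sum_i\tau_i\|A_i\|\big)$; if necessary we enlarge $c$ to ensure $c\ge 1$.

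Alternatively, for this final step one may simply invoke \cite[Lemma 5.3 in Ch.~6]{HaleVerduynLunel1993} and \cite[Corollary 2.1]{Fridman2014}, as the statement suggests. We regard the contour-shift estimate as the technical core.
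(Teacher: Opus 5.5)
Your cycle (ii)$\Rightarrow$(i)$\Rightarrow$(iii)$\Rightarrow$(ii) is the standard argument behind the references the paper cites; the paper gives no proof of its own. The strategy is right. However, the step you call the technical core has a bookkeeping error that, as written, destroys the decay.

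Write $L(s):=A_0+\sum_i\mathrm{e}^{-\tau_i s}A_i$. The splitting $\Delta(s)^{-1}=s^{-1}I_n+s^{-1}L(s)\Delta(s)^{-1}$ puts a pole at $s=0$ into \emph{both} summands, even though $\Delta(s)^{-1}$ itself is holomorphic there by (iii). Since $\Delta(0)=-L(0)$, the second summand has residue $L(0)\Delta(0)^{-1}=-I_n$ at $s=0$.

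If you split on $\mathrm{Re}(s)=\gamma>0$, the first term does give the constant $I_n$. But shifting the second term to $\mathrm{Re}(s)=-b$ then crosses $s=0$, and holomorphy of $\Delta(s)^{-1}$ does not justify that shift. Only the residue $-I_n$ cancels the constant. If you keep ``the constant part of $X$'' as you describe, you get $X(t)=I_n+O(\mathrm{e}^{-bt})$, which does not decay.

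If instead you split after shifting, on $\mathrm{Re}(s)=-b$, the first term contributes $\frac{1}{2\pi i}\,\mathrm{p.v.}\int_{-b-i\infty}^{-b+i\infty}\mathrm{e}^{st}s^{-1}\,\mathrm{d}s=0$ for $t>0$, not a constant.

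Either way the bound $\|X(t)\|\le K\mathrm{e}^{-bt}$ is true, since in fact $X(t)=\frac{1}{2\pi i}\int_{-b-i\infty}^{-b+i\infty}\mathrm{e}^{st}s^{-1}L(s)\Delta(s)^{-1}\,\mathrm{d}s$ for $t>0$. But you must handle this pole explicitly. The cleanest fix is to split with $(s+\beta)^{-1}$ for some $\beta>b$:
$\Delta(s)^{-1}=(s+\beta)^{-1}I_n+(s+\beta)^{-1}(L(s)+\beta I_n)\Delta(s)^{-1}$.
Then the explicit term is $\mathrm{e}^{-\beta t}I_n$, the remainder is $O(|s|^{-2})$, and no pole lies between the two vertical lines.

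There are two smaller slips.

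\begin{itemize}
\item \textbf{The constant $c$.} In the variation-of-constants estimate, the argument $t-\theta-\tau_i$ ranges over $[t-\tau_i,t]$. So $\|X(t-\theta-\tau_i)\|\le K\mathrm{e}^{b\tau_i}\mathrm{e}^{-bt}$, and the constant should be $c=K\bigl(1+\mathrm{e}^{b\tau_q}\sum_i\tau_i\|A_i\|\bigr)$ rather than $K\bigl(1+\sum_i\tau_i\|A_i\|\bigr)$.
\item \textbf{The eigenvector in (i)$\Rightarrow$(iii).} If $s_0$ is real and $v$ happens to be purely imaginary, then $\mathrm{Re}(\varepsilon\mathrm{e}^{s_0t}v)\equiv 0$, so your contradiction fails. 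Choose $v$ real, which is possible because $\Delta(s_0)$ is then a real matrix, or use the imaginary part instead.
\end{itemize}

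With these repairs your proof is complete.
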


Note that the exponential stability of  linear port-Hamiltonian time-delay systems \eqref{eq:pH_delay} is equivalent to that of the transformed system \eqref{eq:pH_delay_simple}. Therefore, we will consider in the following only the system \eqref{eq:pH_delay_simple}.

First, we recall the 
\emph{frequency sweeping test} which was obtained for linear systems with a single delay in \cite{gu2003stability} to establish stability independently of the delay. Here, we reformulate the result for a port-Hamiltonian system and including multiple delays. The result follows from a combination of \cite[Corollary 2.1]{Fridman2014} and \cite[Corollary 1]{NiculescuChen1999} and we denote by $\rho(M)$ the spectral radius of a matrix $M\in\mathbb{C}^{n\times n}$ which is the maximum modulus of its eigenvalues.
\begin{proposition}
\label{prop:sweeping}
The system \eqref{eq:pH_delay_simple} is exponentially stable independently of the delay if and only if the following conditions hold
\begin{itemize} 
\item[\rm (i)] $\hat J-\hat R$ is Hurwitz;
    \item[\rm (ii)] $\hat J-\hat R+\sum_{i=1}^q \hat Z_i$ is Hurwitz;
    \item[\rm (iii)] $\rho\left(\begin{smallbmatrix}
        I_n&\ldots& I_n
    \end{smallbmatrix}^\top(j\omega I-(\hat J-\hat R))^{-1} \begin{smallbmatrix}
        \hat Z_1 &\ldots& \hat Z_q 
    \end{smallbmatrix} \right)<1$, for all $\omega >0$. 
    \end{itemize}   
    \end{proposition}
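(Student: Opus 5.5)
By Proposition~\ref{prop:stability_equivalence}, exponential stability for a fixed delay vector $\tau=(\tau_1,\ldots,\tau_q)$ is equivalent to all roots of the characteristic function
\[
\Delta_\tau(s)=\det\Bigl(sI_n-\hat A-\sum_{i=1}^q {\rm e}^{-\tau_i s}\hat Z_i\Bigr)
\]
lying in the open left half-plane. The plan is therefore a continuity (root-crossing) argument over the delays, closed off by a small-gain characterization of imaginary-axis roots.

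\textbf{Necessity.}
\begin{itemize}
\item Condition (ii) is the limit of the system as all $\tau_i\to 0$. It is obtained from the fact that $s=0$ can never become a root in a stability-preserving family. More precisely, $\Delta_\tau(0)=\det(-\hat A-\sum_i\hat Z_i)$ is independent of $\tau$. The real roots of $\Delta_\tau$ for small delays approximate the eigenvalues of $\hat A+\sum_i \hat Z_i$. Hence, if this matrix has an eigenvalue with $\mathrm{Re}\ge 0$, then small delays produce a root with $\mathrm{Re}\ge 0$ (up to the boundary case, which is handled by continuity).
\item Condition (i) follows by letting the delays tend to infinity. It is cleanest to use the standard fact from \cite{NiculescuChen1999} that delay-independent stability forces $\hat A$ to be Hurwitz: otherwise one chooses $\tau_i$ so that the delayed terms average out along a sequence of large $s$.
\item Condition (iii): suppose that for some $\omega>0$ the matrix
\[
\begin{bmatrix} I_n&\ldots&I_n\end{bmatrix}^\top(i\omega I-\hat A)^{-1}\begin{bmatrix}\hat Z_1&\ldots&\hat Z_q\end{bmatrix}
\]
has an eigenvalue of modulus $\ge 1$. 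Then, by scaling, or by a phase choice ${\rm e}^{-i\omega\tau_i}$ with suitable $\tau_i\ge0$, we produce a root $i\omega$ (or one in the right half-plane) for some delays. This contradicts delay-independent stability.
\end{itemize}

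\textbf{Sufficiency.} By (ii), the system is stable for $\tau=0$. The roots of $\Delta_\tau$ depend continuously on $\tau$, and by (i) no roots come in from infinity in $\mathbb{C}_+$, since $\hat A$ is Hurwitz and the neutral part is absent. Hence instability for some $\tau$ would require a root $i\omega$ on the imaginary axis for some intermediate delay.
\begin{itemize}
\item The case $\omega=0$ is excluded by (ii).
\item For $\omega>0$ we use the determinant identity
\[
\det\bigl(i\omega I-\hat A-\textstyle\sum_i {\rm e}^{-i\omega\tau_i}\hat Z_i\bigr)=\det(i\omega I-\hat A)\,\det\bigl(I-(i\omega I-\hat A)^{-1}\textstyle\sum_i{\rm e}^{-i\omega\tau_i}\hat Z_i\bigr).
\]
Here the first factor is nonzero by (i). Writing $\sum_i {\rm e}^{-i\omega\tau_i}\hat Z_i=\begin{bmatrix}\hat Z_1&\ldots&\hat Z_q\end{bmatrix}D$ with $D=\mathrm{diag}({\rm e}^{-i\omega\tau_i}I_n)\begin{bmatrix}I_n&\ldots&I_n\end{bmatrix}^\top$, we rearrange via $\det(I-XY)=\det(I-YX)$. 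The second factor then vanishes only if $1$ is an eigenvalue of a matrix similar to the unimodular phase-scaled version of the matrix in (iii).
\end{itemize}

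\textbf{Main obstacle.} This last step is the hardest: relating the phase-dependent matrix to the spectral radius in (iii) for multiple independent phases. I would follow \cite[Corollary 1]{NiculescuChen1999} and treat the phases as independent points on the unit circle, in which case the matrix in (iii) is exactly the one whose spectral radius must stay below one. I would invoke that corollary rather than reprove the multi-delay small-gain argument, and transfer it to our notation using Proposition~\ref{prop:stability_equivalence} and \cite[Corollary 2.1]{Fridman2014}.
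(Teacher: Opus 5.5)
The step you flag as the main obstacle cannot be completed, because the implication it needs is false for $q\ge 2$. Write $M(\omega)=\begin{bmatrix}I_n&\cdots&I_n\end{bmatrix}^\top(j\omega I-\hat A)^{-1}\begin{bmatrix}\hat Z_1&\cdots&\hat Z_q\end{bmatrix}$. Your determinant manipulation correctly shows the following: $j\omega$ with $\omega>0$ is a characteristic root for some delays if and only if $\det(I_{nq}-\mathcal Z M(\omega))=0$ for some $\mathcal Z=\mathrm{diag}(z_iI_n)_{i=1}^q$ with $|z_i|=1$. For $q\ge 2$ the phases $z_i={\rm e}^{-j\omega\tau_i}$ can be prescribed independently. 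But $\rho(M(\omega))<1$ only controls a common phase $\mathcal Z=zI$, where $\rho(zM)=\rho(M)$. For independent phases, $\rho(\mathcal Z M(\omega))$ can be much larger. In fact, by $\det(I-XY)=\det(I-YX)$ the nonzero spectrum of $M(\omega)$ equals that of $(j\omega I-\hat A)^{-1}\sum_{i}\hat Z_i$, so (iii) only sees the sum of the delay matrices.

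Here is a concrete counterexample to the \emph{if} direction. Take $n=1$, $q=2$, $\hat J=0$, $\hat R=1$, $\hat Z_1=1$, $\hat Z_2=-1$. Conditions (i) and (ii) hold, since both matrices equal $-1$. Moreover $M(\omega)=\frac{1}{j\omega+1}\begin{bmatrix}1&-1\\1&-1\end{bmatrix}$ is nilpotent, so $\rho(M(\omega))=0$ for every $\omega$. Yet for $\tau_1=2\pi$ and $\tau_2=\tfrac{5\pi}{2}$,
\[
\Bigl(s+1-{\rm e}^{-s\tau_1}+{\rm e}^{-s\tau_2}\Bigr)\Big|_{s=j}=j+1-1-j=0,
\]
so by Proposition~\ref{prop:stability_equivalence} the system is not exponentially stable for these delays. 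Deferring to \cite[Corollary 1]{NiculescuChen1999} in the form stated therefore cannot close the gap. The paper's own justification consists only of this citation together with \cite[Corollary 2.1]{Fridman2014}, so it has the same problem.

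What your argument does prove is the single-delay case $q=1$, or any setting where all phases are forced to coincide, e.g.\ $\tau_1=\cdots=\tau_q$; this is the classical frequency-sweeping test. For independent delays, replace (iii) by
\[
\max_{|z_i|=1}\rho\bigl(\mathrm{diag}(z_iI_n)_{i=1}^qM(\omega)\bigr)<1 \quad \text{for all } \omega>0,
\]
which is a structured-singular-value bound. Alternatively, use a sufficient bound that is invariant under multiplying block rows by unimodular scalars, such as $\|M(\omega)\|_2<1$ or the row-sum bound $\|M(\omega)\|_\infty<1$ from the subsequent remark (note $\|\mathcal Z M\|_\infty=\|M\|_\infty$). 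With either replacement your sufficiency argument goes through unchanged.

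Your necessity arguments survive if you choose a common phase and use continuity in $\omega$ to reach an eigenvalue of modulus exactly one; note that $M(\omega)\to0$ as $\omega\to\infty$. The necessity of (i), however, needs an actual argument rather than \emph{averaging out}:
\begin{itemize}
\item eigenvalues of $\hat A$ in $\mathbb{C}_+$ persist as characteristic roots for large delays by Rouch\'e, since ${\rm e}^{-s\tau_i}\to0$ there;
\item eigenvalues of $\hat A$ on the imaginary axis are excluded by a winding-number argument in the common phase $z$ applied to $\det(j\omega I-\hat A-z\sum_i\hat Z_i)$.
\end{itemize}
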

\begin{remark}
Let $\|\cdot\|$ be any matrix norm that is compatible with a vector norm then $\rho(M)\leq\|M\|$ holds. In particular, we can use the row-sum norm of a matrix $M\in\mathbb{R}^{n\times m}$ given by  $\|M\|_\infty:=\max_{i=1,\ldots,n}\sum_{j=1^m}|m_{ij}|$ to obtain a sufficient condition for Proposition~\ref{prop:sweeping}~(iii) which is 
\begin{align*}
    &~~~~\|\begin{bmatrix}
        I_n&\ldots& I_n
    \end{bmatrix}^\top(j\omega I-(\hat J-\hat R))^{-1} \begin{bmatrix}
        \hat Z_1 &\ldots& \hat Z_q 
    \end{bmatrix}\|_\infty
    \\
    &=\|(j\omega I-(\hat J-\hat R))^{-1} \begin{bmatrix}
        \hat Z_1 &\ldots& \hat Z_q 
    \end{bmatrix}\|_\infty<1.
\end{align*}
\end{remark}

For an overview of delay-dependent and independent stability conditions, we refer to \cite{Fridman2014,Scholl2024}. Below we present a condition on delay-dependent exponential stability which is based on the LMI condition from Theorem~\ref{thm:delay_passive} and is essentially an extension of \cite[Theorem 3.7]{Fridman2014} to the case of multiple delays.
\begin{proposition}
\label{prop:stable}
The pH system with delay terms \eqref{eq:pH_delay_simple} is globally exponentially stable for the delays $\tau_1\leq\ldots \leq\tau_q$ if there exists positive definite $\mathcal{P}$ and semi-definite matrices $\Theta_i$, $\Gamma_i$ and $\Omega_i$ such that 
\begin{equation}\label{eq:pH_NLMI_stable}
2{\rm sym}(\hat G_2^\top \mathcal{P} \hat G_1)+ \hat M_1+ \hat M_{2,\tau}+\hat M_{3,\tau}+\hat M_{4}<0,
\end{equation}
holds, where 
\begin{align*}
\hat G_1&=\begin{bmatrix}
    I_n & 0 & 0\\
    0 &0& \text{diag} (\tau_i I_n)_{i=1}^q
\end{bmatrix},  \hat G_2&=\begin{bmatrix}
\hat{A}&  \begin{smallbmatrix}
\hat Z_1 &\ldots & \hat Z_q\end{smallbmatrix} & 0  \\
 \begin{smallbmatrix} I_n & \ldots &I_n \end{smallbmatrix}^\top & -I_{n\cdot q}& 0 
\end{bmatrix},
\end{align*}
and the matrices $\hat M_1$, $\hat M_{2,\tau}$, $\hat M_{3,\tau}$, $\hat M_{4}$ are defined below 
\begin{align*}
 \hat M_1&= \begin{bmatrix}
      \sum_{i=1}^q \Theta_i& 0& 0  \\ 
     0 & -\mathrm{diag}(\Theta_i)_{i=1}^q & 0  \\ 0&0&0
\end{bmatrix},\\
\hat M_{2,\tau}&=\begin{bmatrix}
  \hat A^\top \Gamma_{\Sigma,\tau} \hat A & M_2^{12} & 0  \\
 *&  \begin{bmatrix}
\hat Z_i^\top   \Gamma_{\Sigma,\tau} \hat Z_j  
 \end{bmatrix}_{i,j=1}^q& 0  \\
 0&  0&  -{\rm diag}(\tau_i^2 \Omega_{i})_{i=1}^q
\end{bmatrix},\\
\hat M_{3,\tau}&=\begin{bmatrix}
 -\Gamma_{\Sigma,1}+ \Omega_{\Sigma,\tau} & \begin{bmatrix}
\Gamma_1 & \ldots & \Gamma_q 
\end{bmatrix} & 0  \\
 *& -\mathrm{diag}(\Gamma_i)_{i=1}^q & 0  \\
 0 & 0& 0 \\
\end{bmatrix},\\
\hat M_{4}&=-3\begin{bmatrix}
\Gamma_{\Sigma,1} & \begin{bmatrix}
 \Gamma_1 & \ldots & \Gamma_q 
\end{bmatrix}& -2\begin{bmatrix}
\Gamma_1 & \ldots \Gamma_q  
\end{bmatrix}&  \\
* & \mathrm{diag}(\Gamma_i)_{i=1}^q   & -2\mathrm{diag}(\Gamma_i)_{i=1}^q  \\
*& * & 4\mathrm{diag}( \Gamma_i)_{i=1}^q
\end{bmatrix}.   
\end{align*}
\end{proposition}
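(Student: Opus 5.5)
We use the functional $\mathcal{S}$ from \eqref{eq:storage_fixed_delay_PH} as a Lyapunov--Krasovskii functional for the zero-input system $u=0$. The derivative estimate is the one in the proof of Theorem~\ref{thm:delay_passive}, with the last block row and column ($u$ and $M_5$) removed. Put $\hat\zeta_q(t)=[x(t)^\top,\,x(t-\tau_1)^\top,\ldots,x(t-\tau_q)^\top,\,\tfrac{1}{\tau_1}(\int_{-\tau_1}^0x_t)^\top,\ldots]^\top$. Then $\xi_q=\hat G_1\hat\zeta_q$ and $\dot\xi_q=\hat G_2\hat\zeta_q$. Differentiating $\mathcal{S}$ along solutions uses three ingredients:
\begin{itemize}
\item the Leibniz rule for the $\Theta_i$ terms;
\item Jensen's inequality for the $\Omega_i$ terms;
\item Lemma~\ref{Lemma:Wirtinger} for the $\Gamma_i$ terms.
\end{itemize}
Exactly as in the proof of Theorem~\ref{thm:delay_passive}, this gives
\[
\tfrac{\mathrm d}{\mathrm dt}\mathcal{S}(x_t)\le \hat\zeta_q(t)^\top \mathcal{M}\,\hat\zeta_q(t),
\]
where $\mathcal{M}$ is the left-hand side of \eqref{eq:pH_NLMI_stable}. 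Solutions are $C^1$ for $t>\tau_q$, and for $t\in[0,\tau_q]$ one can use $C^1$ histories and a density argument. By the strict inequality there is $\varepsilon>0$ with $\mathcal{M}\le-\varepsilon I$, and hence
\[
\tfrac{\mathrm d}{\mathrm dt}\mathcal{S}(x_t)\le -\varepsilon\|x(t)\|^2 .
\]

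Next we bound $\mathcal{S}$ from below. Since $\mathcal{P}>0$, there is $\alpha>0$ with $\mathcal{S}(x_t)\ge \xi_q^\top\mathcal{P}\xi_q\ge \alpha\|x(t)\|^2$.

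The cleanest route to exponential stability goes through Proposition~\ref{prop:stability_equivalence}, so it suffices to prove local asymptotic stability. Integrating the derivative bound gives
\[
\alpha\|x(t)\|^2+\varepsilon\int_0^t\|x(s)\|^2\mathrm ds\le \mathcal{S}(x_0).
\]
Thus $x$ is bounded and lies in $L^2(0,\infty)$. Boundedness of $x$ and of its delayed values gives boundedness of $\dot x$ through the equation, so $x$ is uniformly continuous. Barbalat's lemma then yields $x(t)\to0$.

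To apply this, $\mathcal{S}(x_0)$ must be finite and controlled by the initial data. Without the $\Gamma_i$ terms, $\mathcal{S}(\phi)\le \beta\|\phi\|_\infty^2$. With the $\Gamma_i$ terms, $\mathcal{S}$ also depends on $\dot\phi$. We handle this by shifting time: for $t\ge\tau_q$ the solution is $C^1$, and $\|\dot x\|$ on $[0,\tau_q]$ is bounded by a constant times $\|\phi\|_\infty$ via the equation and Gronwall. Starting the argument at $t_0=\tau_q$ therefore gives $\mathcal{S}(x_{\tau_q})\le\beta'\|\phi\|_\infty^2$. This yields convergence to zero, which is (i) of Proposition~\ref{prop:stability_equivalence}, and hence global exponential stability by (ii).

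The main obstacle is this regularity and initial-condition step, i.e.\ making the derivative computation rigorous on $[0,\tau_q]$ when $\Gamma_i\neq0$. The time-shift argument above circumvents it. The remaining step, reusing the matrix identity from Theorem~\ref{thm:delay_passive} without the input block, is routine bookkeeping.
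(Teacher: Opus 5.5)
Your proposal is correct and follows the paper's proof. You reuse the derivative estimate from Theorem~\ref{thm:delay_passive} with the input block removed, so that $\mathcal{S}$ is a Lyapunov--Krasovskii functional, obtain convergence to zero, and upgrade to global exponential stability via Proposition~\ref{prop:stability_equivalence}; the only difference is that where the paper cites \cite{Fridman2014} for the Lyapunov--Krasovskii step, you prove it directly (using $\mathcal{M}\le-\varepsilon I$, the bound $\mathcal{S}\ge\lambda_{\min}(\mathcal{P})\|x(t)\|^2$, Barbalat, and the shift to $t_0=\tau_q$ to handle the $\dot x$-dependence of the $\Gamma_i$ terms), which fills in detail rather than changing the route.
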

\begin{proof}
Repeating the calculations from the proof of Theorem~\ref{thm:delay_passive} shows that $\mathcal{S}$ is a Lyapunov function of \eqref{eq:pH_delay_simple} with $u=0$ which gives the asymptotic stability \cite{Fridman2014}. Using Proposition~\ref{prop:stability_equivalence}, we conclude the exponential stability.
\end{proof}

\section{Applications}
In this section, we apply our results on stability and passivity to two practical examples which are modeled as linear pH systems with delay terms of the form \eqref{eq:pH_delay}. 

\subsection{District heating networks}
In the following, we present an application of our passivity and stability results to delay equations arising in the modeling of district heating networks, where linear transport equations are transformed into delay equations \cite{Jensen23}. The delay is used to model the temperature transport in long pipes. Depending on the network topology and the location of the long pipes this leads to different delay systems. A schematics of the considered district heating network is shown in Figure~\ref{fig:district_heating_networks}. 
The supply heat $T_{\rm in}$ is produced via a boiler and a mixing loop: 
\begin{equation*}
V_{\rm s}\dot{T}_{\rm in}(t) = q_{\rm in}(t)\left(T_{\rm out}(t) - T_{\rm in}(t)\right) + P_h(t) -\kappa T_{\rm in}(t),
\end{equation*}
where $P_h(t)$ is the power provided by the boiler via the hot
feedwater and $T_{\rm out}$ is the return temperature from the network, which is a
simple mixing of the outlet temperatures of each consumer:
\begin{equation}
T_{\rm out}(t) = \frac{1}{q_{\rm in}(t)} \sum_{i=1}^{q} q_i(t) T_i(t),\quad  q_{\rm in}(t)=\sum_{i=1}^q q_i(t),\quad t\geq 0
\label{eq:Tout}
\end{equation}
where $q$ is the total number of consumers and the second equation represents the preservation of mass, 
i.e.\ the feedwater of the boiler is the sum of all consumer mass-flows $q_i(t)>0$ which are assumed to be positive for all times. This leads to 
\begin{align*}
V_{\rm s}\dot{T}_{\rm in}(t) &=  q_{\rm in}(t)\left(\frac{1}{q_{\rm in}(t)} \sum_{i=1}^{q} q_i(t) T_i(t) - T_{\rm in}(t)\right) -\kappa T_{\rm in}(t) + P_h(t)\\
&= \left(\sum_{i=1}^{q} q_i(t)( T_i(t) - T_{\rm in}(t))\right)-\kappa T_{\rm in}(t) + P_h(t).
\label{eq:Tin}
\end{align*}

The temperature at each consumer behaves according to the first-order
differential equation: 
\begin{equation}
V_i\dot{T}_i(t) = q_i(t)\left(T_{\rm in}(t - \tau_i) - T_i(t)\right) - w_i(t)
\label{eq:Ti}
\end{equation}
where $V_i$ is the effective volume of the $i$th heat exchanger and
$w_i(t)$ is the load at the $i$th consumer.

\begin{figure}[t]
    \centering
    \resizebox{\linewidth}{!}{%
    \begin{tikzpicture}[
        x=1cm,
        y=1cm,
        >=Latex,
        font=\footnotesize,
        producer/.style={
            draw,
            rounded corners,
            align=center,
            minimum width=2.7cm,
            minimum height=1.05cm,
            inner sep=2pt
        },
        consumer/.style={
            draw,
            rounded corners,
            align=center,
            minimum width=2.35cm,
            minimum height=0.85cm,
            inner sep=2pt
        },
        mix/.style={
            draw,
            circle,
            align=center,
            minimum size=0.75cm,
            inner sep=1pt
        },
        supply/.style={
            -{Latex[length=2mm]},
            thick,
            decorate,
            decoration={snake, amplitude=0.1mm, segment length=1.5mm}
        },
        return/.style={
            -{Latex[length=2mm]},
            thick,
            dashed
        },
        input/.style={
            -{Latex[length=2mm]},
            thick
        },
        loss/.style={
            -{Latex[length=2mm]},
            thick
        }
    ]

    \node[producer] (prod) at (0,0.2)
        {Producer \\ $V_{\rm s},\,T_{\rm in}$};

    \node[mix] (mix) at (0,-2.35) {$\sum$};

    \node[consumer] (c1) at (4.45,0.1)
        {Consumer $1$\\$V_1,\,T_1$};


    \node (dots) at (4.45,-0.75) {$\vdots$};

    \node[consumer] (cq) at (4.45,-1.75)
        {Consumer $q$\\$V_q,\,T_q$};

    \draw[input]
        ($(prod.north)+(0,0.75)$)
        -- node[right] {$P_h$}
        (prod.north);

    \draw[loss]
        (prod.west)
        -- ++(-0.95,0)
        node[left] {$\kappa T_{\rm in}$};

    \draw[input]
        (mix.north)
        -- node[left] {$T_{\rm out},\,q_{\rm in}$}
        (prod.south);

    \draw[supply]
        (prod.east)
        to[out=25,in=180]
        node[above,sloped,pos=0.55] {$q_1,\;\tau_1$}
        (c1.west);


    \draw[supply]
        (prod.east)
        to[out=-25,in=180]
        node[below,sloped,pos=0.25] {$q_q,\;\tau_q$}
        (cq.west);

    \draw[input]
        (c1.east)
        -- ++(0.9,0)
        node[right] {$w_1$};


    \draw[input]
        (cq.east)
        -- ++(0.9,0)
        node[right] {$w_q$};

    \draw[return]
        (c1.south west)
        to[out=-155,in=25]
        node[below,sloped,pos=0.15] {$q_1T_1$}
        (mix.east);


    \draw[return]
        (cq.west)
        to[out=180,in=-15]
        node[below,sloped,pos=0.25] {$q_qT_q$}
        (mix.east);



    \end{tikzpicture}%
    }
    \caption{District heating network with delayed supply transport. The producer supplies the temperature $T_{\rm in}$ to consumer $i$ through a pipe with mass flow $q_i$ and transport delay $\tau_i$. The consumer outlet temperatures are mixed into the return temperature $T_{\rm out}$.}
    \label{fig:district_heating_networks}
\end{figure}

In the following, we assume that the hydraulics of the district heating system is stationary, i.e.\  the mass-flow rates $q_i>0$ for all $i=1,\ldots, q$ are constant and hence, by the mass conservation also $q_{\rm in}>0$ is constant. First, we investigate the case of constant mass-flows. Then the system can be written as a linear port-Hamiltonian delay system of the following form 
\begin{align*}
\tfrac{d}{dt}\begin{bmatrix}
    V_{\rm s}T_{\rm in}\\ V_1T_1\\ \vdots\\
    V_qT_q
\end{bmatrix}&=\underbrace{
\begin{bmatrix}
    -\sum_{i=1}^q q_i-\kappa &q_1& \ldots &q_q \\ 0 &-q_1& \ldots & 0 \\ \vdots & &\ddots & \vdots \\ 0&\ldots&& -q_q
\end{bmatrix}}_{=A} \underbrace{
\begin{bmatrix}
    V_{\rm s}^{-1} & 0 & \ldots & 0  \\ 0&V_1^{-1}& &\vdots\\  & & \ddots &  \\ 0&\ldots&&V_q^{-1}
\end{bmatrix}}_{=H}\begin{bmatrix}
    V_{\rm s}T_{\rm in}\\ V_1T_1\\ \vdots\\
    V_qT_q
\end{bmatrix}\\&~~~~+\begin{bmatrix}
    0\\ q_1T_{\rm in}(t-\tau_1) \\ \vdots \\ q_q T_{\rm in}(t-\tau_q) 
\end{bmatrix}+I\begin{bmatrix}
    P_h \\ -w_1 \\ \vdots \\ - w_q
\end{bmatrix}
\end{align*}

The simplified representation \eqref{eq:pH_delay_simple} is given by 
\begin{align}
\label{eq:simplified_heating}
\tfrac{d}{dt}\begin{bmatrix}
    \hat{T}_{\rm in}\\ \hat{T}_1\\ \vdots\\
    \hat{T}_q
\end{bmatrix}&=\underbrace{
\begin{bmatrix}
    \tfrac{-\sum_{i=1}^q q_i-\kappa}{V_{\rm s}} &\frac{q_1}{\sqrt{V_{\rm s}}\sqrt{V_1}}& \ldots &\frac{q_q}{\sqrt{V_{\rm s}}\sqrt{V_q}} \\ 0 &-\frac{q_1}{V_1}& \ldots & 0 \\ \vdots & &\ddots & \vdots \\ 0&\ldots&& -\frac{q_q}{V_q}
\end{bmatrix}}_{=\hat A} \!\!
\begin{bmatrix}
    \hat{T}_{\rm in}\\ \hat{T}_1\\ \vdots\\
    \hat{T}_q
\end{bmatrix}\\&~~~~+\begin{bmatrix}
    0\\ \frac{q_1}{\sqrt{V_{\rm s}}\sqrt{V_1}}\hat{T}_{\rm in}(t-\tau_1) \\ \vdots \\ \frac{q_q}{\sqrt{V_{\rm s}}\sqrt{V_q}} \hat{T}_{\rm in}(t-\tau_q) 
\end{bmatrix}+H^{1/2}\begin{bmatrix}
    P_h \\ -w_1 \\ \vdots \\ - w_q
\end{bmatrix}.
\end{align}

This can be written as a port-Hamiltonian system with delay terms of the form \eqref{eq:pH_delay_simple} by using 
\[
\hat J-\hat R=\hat A,\quad \hat Z_i:=\frac{q_i}{\sqrt{V_{\rm s}}\sqrt{V_i}}e_{i+1}e_1^\top.
\]

\begin{proposition}
 \label{prop:heating}
The system \eqref{eq:simplified_heating} is exponentially stable independently of the delay if $V_{\rm s}\geq \max_{i=1,\ldots,q}V_i$.
\end{proposition}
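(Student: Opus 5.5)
The plan is to verify condition (iii) of Proposition~\ref{prop:stability_equivalence} directly for every fixed choice of delays $\tau_1,\ldots,\tau_q\geq 0$. This gives exponential stability independently of the delay without passing through Lyapunov--Krasovskii functionals. Exponential stability is invariant under the state transformation $\hat x=H^{1/2}x$, so I work with the original, unscaled variables $T_{\rm in},T_1,\ldots,T_q$.

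\textbf{Step 1: characteristic equation.} Take an exponential solution $T_{\rm in}(t)=a\,e^{st}$, $T_i(t)=b_i e^{st}$ of the homogeneous system ($P_h=0$, $w_i=0$). The consumer equations give $(sV_i+q_i)b_i=q_i e^{-s\tau_i}a$. Suppose $\mathrm{Re}\,s\geq 0$. Since $q_i>0$, we have $sV_i+q_i\neq 0$, so $b_i=\frac{q_i e^{-s\tau_i}}{sV_i+q_i}a$. In particular $a=0$ forces all $b_i=0$, so any nontrivial solution has $a\neq 0$. Substituting into the producer equation, a root $s$ with $\mathrm{Re}\,s\geq0$ of \eqref{eq:characteristic} must satisfy
\[
sV_{\rm s}+\sum_{i=1}^q q_i+\kappa=\sum_{i=1}^q \frac{q_i^2e^{-s\tau_i}}{sV_i+q_i}.
\]

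\textbf{Step 2: modulus estimate.} For $\mathrm{Re}\,s\geq0$ we have $|e^{-s\tau_i}|\leq 1$ and $|sV_i+q_i|\geq \mathrm{Re}(sV_i+q_i)\geq q_i$. Hence the right-hand side has modulus at most $\sum_i q_i$. The left-hand side has modulus at least $\mathrm{Re}\,s\,V_{\rm s}+\sum_i q_i+\kappa$. This exceeds $\sum_i q_i$ as soon as $\kappa>0$, so there is no root in the closed right half-plane. By Proposition~\ref{prop:stability_equivalence}, the system is then exponentially stable for every choice of delays.

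\textbf{Main obstacle: the role of the hypotheses.} The delicate point is the borderline case $\kappa=0$ (no heat losses). In that case $s=0$ is always a root, because mass conservation makes the row sums of $A+\sum_i\hat Z_i$ vanish, so condition (ii) of Proposition~\ref{prop:sweeping} fails. I would therefore make explicit that $\kappa>0$ is part of the standing assumptions.

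The hypothesis $V_{\rm s}\geq\max_i V_i$ does not appear in the argument above. It is, however, what one needs along the paper's alternative route via Proposition~\ref{prop:sweeping} together with the subsequent row-sum remark. Along that route:
\begin{itemize}
\item $\hat A$ is upper triangular with negative diagonal, so it is Hurwitz, which gives condition (i).
\item $\hat A+\sum_i\hat Z_i$ is similar to a matrix with strictly negative row sums and nonnegative off-diagonal entries (a Metzler matrix), hence it is Hurwitz, which gives condition (ii).
\item Condition (iii) reduces to bounding $\sum_i \frac{q_i^2}{\sqrt{V_{\rm s}V_i}}$ against $|j\omega V_{\rm s}+\sum_i q_i+\kappa|$ after scaling.
\end{itemize}
In that last bound, $V_{\rm s}\geq V_i$ ensures $\frac{q_i}{\sqrt{V_{\rm s}V_i}}\le \frac{q_i}{V_i}$, which makes the row-sum estimate close. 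I would present the direct characteristic-equation argument as the proof, and remark that the volume condition suffices for the norm-based sweeping test.
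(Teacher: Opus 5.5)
Your proof is correct, and it takes a different route from the paper.

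\textbf{The paper's route.} The paper checks the three conditions of the frequency-sweeping criterion (Proposition~\ref{prop:sweeping}) in the scaled coordinates:
\begin{itemize}
\item $\hat A$ is Hurwitz because it is upper triangular with negative diagonal.
\item $\hat A+\sum_i\hat Z_i=H^{1/2}SH^{1/2}$ is Hurwitz because $S$ is symmetric with $x^\top Sx=-\kappa x_1^2-\sum_i q_i(x_1-x_{i+1})^2$.
\item Condition (iii) is checked through the row-sum norm of $(j\omega I-\hat A)^{-1}[\hat Z_1\ \cdots\ \hat Z_q]$, computed via Sherman--Morrison--Woodbury.
\end{itemize}

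\textbf{Your route and what it buys.} You instead eliminate the consumer states from $\Delta(s)v=0$ on the closed right half-plane. You then rule out the resulting scalar equation with a single modulus estimate. This has several advantages:
\begin{itemize}
\item It treats $\mathrm{Re}\,s>0$ and the imaginary axis, including $s=0$ and $\tau_i=0$, in one stroke.
\item It needs only Proposition~\ref{prop:stability_equivalence}, not the multi-delay sweeping result or the resolvent computation.
\item It proves a strictly stronger statement: no condition on the volumes is required.
\end{itemize}
The cost is that it exploits the star topology, since every delayed coupling enters through the single producer state. The sweeping criterion itself is formulated for general $\hat Z_i$. However, the paper's explicit norm computation is just as topology-specific, as its subsequent remark concedes.

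\textbf{The role of $\kappa$.} You are right to insist on $\kappa>0$. The paper uses it tacitly in two places: for strict negativity of the quadratic form in (ii), and for the first-row bound $\sum_i q_i/(\sum_i q_i+\kappa)<1$. For $\kappa=0$ the statement is false, because $s=0$ is a root.

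\textbf{A correction to your side remark.} In the paper's estimate the volume hypothesis does not enter the first row, which needs only $\kappa>0$. It enters rows $2,\ldots,q+1$, whose entries have modulus $\frac{q_i}{\sqrt{V_{\rm s}V_i}}\,|j\omega+q_i/V_i|^{-1}<\sqrt{V_i/V_{\rm s}}$.

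This use of the hypothesis is an artifact of taking the row-sum norm in the $H^{1/2}$-scaled coordinates. In the original temperature coordinates those entries become $\frac{q_i/V_i}{j\omega+q_i/V_i}$, which has modulus less than one for $\omega\neq 0$. The first row is unchanged. Since $[I_n\ \cdots\ I_n]^\top(j\omega I-\hat A)^{-1}[\hat Z_1\ \cdots\ \hat Z_q]$ transforms by a block-diagonal similarity under this rescaling, its spectral radius is unaffected. So the sweeping argument also goes through without $V_{\rm s}\geq\max_i V_i$, which is consistent with your direct proof.
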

\begin{proof}
We aim to show delay-independent exponential stability by verifying the frequency sweeping conditions. First, note that, since $\hat A$ is upper triangular with negative diagonal entries it is Hurwitz. Moreover,
\[
\hat J-\hat R+\sum_{i=1}^q \hat Z_i=H^{1/2}\begin{smallbmatrix}
    -\sum_{i=1}^q q_i-\kappa &q_1& \ldots &q_q \\ q_1 &-q_1& \ldots & \vdots \\ \vdots & &\ddots & \vdots \\ q_q&\ldots&& -q_q
\end{smallbmatrix}H^{1/2}
\]
is Hurwitz which follows from the negative definiteness property for all $x\in\mathbb{R}^{q+1}$ with $x\neq 0$
\[
x^\top 
\begin{smallbmatrix}
    -\sum_{i=1}^q q_i-\kappa &q_1& \ldots &q_q \\ q_1 &-q_1& \ldots & \vdots \\ \vdots & &\ddots & \vdots \\ q_q&\ldots&& -q_q
\end{smallbmatrix}x=-\kappa x_1^2-\sum_{i=2}^{q+1}q_{i-1}(x_1-x_i)^2,
\]
which is negative.  The Sherman-Morrison-Woodbury formula with $u=e_1$ and $v=-[0, \frac{q_1}{\sqrt{V_{\rm s}}\sqrt{V_1}},\ldots, \frac{q_q}{\sqrt{V_{\rm s}}\sqrt{V_q}}]^\top  $ gives
\begin{align*}
&~~~~(j\omega -(\hat J-\hat R))^{-1}\begin{bmatrix}
   \hat Z_1&\ldots & \hat Z_q
\end{bmatrix}\\&=\begin{bmatrix}
j\omega+V_{\rm s}^{-1}(\sum_{i=1}^q q_i+\kappa) &-\frac{q_1}{\sqrt{V_{\rm s}}\sqrt{V_1}}& \ldots &-\frac{q_q}{\sqrt{V_{\rm s}}\sqrt{V_q}} \\ 0 & j\omega+ V_1^{-1}q_1& \ldots & 0 \\ \vdots & &\ddots & \vdots \\ 0&\ldots&& j\omega +V_q^{-1}q_q
\end{bmatrix}^{-1}\\&~~~~\cdot\begin{bmatrix}    
\frac{q_1}{\sqrt{V_{\rm s}}\sqrt{V_1}}e_{2}e_1^\top &\ldots & \frac{q_q}{\sqrt{V_{\rm s}}\sqrt{V_q}}e_{q+1}e_1^\top\end{bmatrix} \\
&=(D+uv^\top)^{-1}\begin{bmatrix}    
\frac{q_1}{\sqrt{V_{\rm s}}\sqrt{V_1}}e_{2}e_1^\top &\ldots & \frac{q_q}{\sqrt{V_{\rm s}}\sqrt{V_q}}e_{q+1}e_1^\top\end{bmatrix} \\
&=\left(D^{-1}-\frac{D^{-1}uv^\top D^{-1}}{1+v^\top D^{-1}u}\right)\begin{bmatrix}    
\frac{q_1}{\sqrt{V_{\rm s}}\sqrt{V_1}}e_{2}e_1^\top &\ldots & \frac{q_q}{\sqrt{V_{\rm s}}\sqrt{V_q}}e_{q+1}e_1^\top\end{bmatrix}\\
&=\begin{bmatrix} \frac{q_1}{\sqrt{V_{\rm s}}\sqrt{V_1}}\left(   
D^{-1}e_{2}e_1^\top  + \frac{\frac{q_1}{\sqrt{V_{\rm s}}\sqrt{V_1}}}{j\omega +V_1^{-1}q_1}\frac{1}{j\omega + V_{\rm s}^{-1}(\sum_{i=1}^q q_i+\kappa)}e_1e_1^\top\right) &\ldots \end{bmatrix},
\end{align*}
where we used in the last equation that $D$ is diagonal and therefore $v^\top D^{-1}u=0$. 

We compute the row-sum norm we have in the first row one non-zero entry in each of the matrices $\hat Z_i$ which are given by 
\begin{align*}
&~~~~\frac{\frac{q_1^2}{V_{\rm s}V_1}}{j\omega +V_1^{-1}q_1}\frac{1}{j\omega + V_{\rm s}^{-1}(\sum_{i=1}^q q_i+\kappa)}&=\frac{q_1}{q_1^{-1}V_1j\omega +1}\frac{1}{V_{\rm s}j\omega+\sum_{i=1}^q q_i+\kappa}.
\end{align*}
Computing the row sum of the absolute values leads to
\begin{align*}    
&\sum_{i=1}^q\left|\frac{q_i}{q_i^{-1}V_ij\omega +1}\frac{1}{V_{\rm s}j\omega+\sum_{i=1}^q q_i+\kappa}\right|&< \frac{1}{\sum_{i=1}^q q_i+\kappa}\left(\sum_{i=1}^q q_i\right)<1.
\end{align*}
In the remaining rows with index from $2$ to $n$ we have only a single non-zero entry with absolute value 
\[
\left|\frac{q_i}{\sqrt{V_{\rm s}}\sqrt{V_i}}
\frac{1}{j\omega +V_i^{-1}q_i}\right|< \frac{\sqrt{V_i}}{\sqrt{V_{\rm s}}}\leq 1, \quad i=1,\ldots,n,
\]
where we used that $\omega\neq 0$. 
Hence, the exponential stability of the system follows from Proposition~\ref{prop:sweeping}.
\end{proof}

\begin{remark}
The previous network topology assumes that the temperature delay occurs only during transport to the consumers but not in the output temperature in the return flow. However, in many practical district heating system the water also has to be transported back to the producer which induces additional delay terms. The proof of Proposition~\ref{prop:heating} this would lead to higher rank perturbations and therefore the result cannot be generalized directly to these situations. However, the passivity and stability properties can still be verified numerically by solving the LMIs that are presented in Theorem~\ref{thm:delay_passive} and Proposition~\ref{prop:stable}. 
\end{remark}

\subsection{Harmonic oscillator with delayed stiffness}
We consider a harmonic oscillator with state $x=(q,p)$ where $q$ is the generalized position and $p=m\dot q$ is the momentum and an input force $u$. Furthermore, we assume a velocity-dependent damping $d>0$ and a delayed stiffness described by some $a>0$
\begin{align}
\dot x(t)&=\begin{bmatrix}
0&1 \\ -1&-d
\end{bmatrix}Hx(t)+\begin{bmatrix}
0&0\\ -a&0
\end{bmatrix}Hx(t-\tau)+\begin{bmatrix}
    0 \\ 1
\end{bmatrix}u(t) \nonumber \\
y(t)&= \begin{bmatrix}
    0 & 1
\end{bmatrix}\begin{bmatrix}
    k&0\\0&m^{-1}
\end{bmatrix} x(t),\quad H=\begin{bmatrix}
    k&0\\0&m^{-1}
\end{bmatrix}. \label{eq:harmonic_stiff}
\end{align}
Hence, the rescaled pH system \eqref{eq:pH_delay_simple} is given by 
\begin{align}\label{eq:harmonic_stiff_simple}
\dot{\hat{x}}(t)&=\begin{bmatrix}
        0&\alpha\\
        -\alpha&-\beta
    \end{bmatrix}\hat{x}(t) +\begin{bmatrix}
        0&0\\
        -a\alpha&0
    \end{bmatrix}\hat{x}(t-\tau)+\begin{bmatrix}
    0 \\ b
\end{bmatrix}u(t) \nonumber \\
y(t)&= \begin{bmatrix}\nonumber
    0 & b
\end{bmatrix}\hat{x}(t)
\end{align}
where we use 
\[
    \alpha:=\sqrt{\frac{k}{m}},
    \qquad
    \beta:=\frac{d}{m},
    \qquad
    b:=m^{-1/2}.
\]
From Theorem~\ref{thm:delay_passive} we find that the system \eqref{eq:harmonic_stiff} is passive with the storage function \eqref{eq:storage_fixed_delay_PH} with  
\begin{align*}
    \mathcal{P}&=\begin{bmatrix}
        P&0\\0&0
    \end{bmatrix},\quad P=\frac{1}{2}\begin{bmatrix}
        1+a &0\\0&1
    \end{bmatrix},\quad \Theta_1=0, \Gamma_1&=\frac{a}{2\tau}\begin{bmatrix}
        1&0\\0&0
    \end{bmatrix},\quad \Omega_1=0 
\end{align*}
whenever $0\leq ka\tau\leq d$ which we will verify in the following. 
The matrices $G_1$ and $G_2$ are
\[
    G_1=
    \begin{bmatrix}
        1&0&0&0&0&0&0\\
        0&1&0&0&0&0&0\\
        0&0&0&0&\tau&0&0\\
        0&0&0&0&0&\tau&0
    \end{bmatrix},
\]
and
\[
    G_2=
    \begin{bmatrix}
        0&\alpha&0&0&0&0&0\\
        -\alpha&-\beta&-a\alpha&0&0&0&b\\
        1&0&-1&0&0&0&0\\
        0&1&0&-1&0&0&0
    \end{bmatrix}.
\]
With the above choice, we find 
\[
    2\operatorname{sym}(G_2^\top\mathcal P G_1)
    =
    \begin{bmatrix}
        0&\frac{a\alpha}{2}&0&0&0&0&0\\
        \frac{a\alpha}{2}&-\beta&-\frac{a\alpha}{2}&0&0&0&\frac b2\\
        0&-\frac{a\alpha}{2}&0&0&0&0&0\\
        0&0&0&0&0&0&0\\
        0&0&0&0&0&0&0\\
        0&0&0&0&0&0&0\\
        0&\frac b2&0&0&0&0&0
    \end{bmatrix}.
\]

Since $\Theta=0$, we have 
    $M_1=0$. Furthermore, 
    $M_{2,\tau}
    =\frac{a\tau\alpha^2}{2}  e_2e_2^\top$ where $e_2$ is the canonical unit vector in $\mathbb{R}^7$.
The corrected endpoint term is
\[
    M_{3,\tau}
    = \frac{a}{2\tau}
    \begin{bmatrix}
        -1&0&1&0&0&0&0\\
        0&0&0&0&0&0&0\\
        1&0&-1&0&0&0&0\\
        0&0&0&0&0&0&0\\
        0&0&0&0&0&0&0\\
        0&0&0&0&0&0&0\\
        0&0&0&0&0&0&0
    \end{bmatrix}.
\]

The Wirtinger correction term is
\[
    M_4 
    = \frac{a}{2\tau}
    \begin{bmatrix}
        -3&0&-3&0&6&0&0\\
        0&0&0&0&0&0&0\\
        -3&0&-3&0&6&0&0\\
        0&0&0&0&0&0&0\\
        6&0&6&0&-12&0&0\\
        0&0&0&0&0&0&0\\
        0&0&0&0&0&0&0
    \end{bmatrix}.
\]
the supply-rate term is
$    M_5
    =
    -\frac b2(e_2e_7^\top+e_7 e_2^\top )$.  
Therefore,
\begin{align*}
   & ~~~~2\operatorname{sym}(G_2^\top\mathcal P G_1)
    +M_1+M_{2,\tau}+M_{3,\tau}+M_4+M_5\\ 
 &=   \begin{bmatrix}
        -\frac{2a}{\tau}
        &
        \frac{a\alpha}{2}
        &
        -\frac{a}{\tau}
        &
        0
        &
        \frac{3a}{\tau}
        &
        0
        &
        0
        \\
        \frac{a\alpha}{2}
        &
        \frac{a\tau\alpha^2}{2}-\beta
        &
        -\frac{a\alpha}{2}
        &
        0
        &
        0
        &
        0
        &
        0
        \\
        -\frac{a}{\tau}
        &
        -\frac{a\alpha}{2}
        &
        -\frac{2a}{\tau}
        &
        0
        &
        \frac{3a}{\tau}
        &
        0
        &
        0
        \\
        0&0&0&0&0&0&0
        \\
        \frac{3a}{\tau}
        &
        0
        &
        \frac{3a}{\tau}
        &
        0
        &
        -\frac{6a}{\tau}
        &
        0
        &
        0
        \\
        0&0&0&0&0&0&0
        \\
        0&0&0&0&0&0&0
    \end{bmatrix}.
\end{align*}
To verify the the negative semi-definiteness, we remove the zero rows and columns and consider the reduced matrix  
\[ 
\begin{bmatrix} -\frac{2a}{\tau} & \!\! \frac{a\alpha}{2} &\!\! -\frac{a}{\tau} &\!\! \frac{3a}{\tau} \\ \frac{a\alpha}{2} & \!\!\frac{a\tau\alpha^2}{2}-\beta & \!\!-\frac{a\alpha}{2} &\!\! 0 \\ -\frac{a}{\tau} & \!\!-\frac{a\alpha}{2} & \!\!-\frac{2a}{\tau} & \!\!\frac{3a}{\tau} \\ \frac{3a}{\tau} & \!\! 0 & \!\! \frac{3a}{\tau} & \!\! -\frac{6a}{\tau} \end{bmatrix}
\!=\!T^\top\!\begin{bmatrix} -\frac{a}{2\tau}&0&0&0\\ 0&-\frac{3a}{2\tau}&0&0\\ 0&0&a\tau\alpha^2-\beta&0\\ 0&0&0&0 \end{bmatrix} \! T 
\] 
which can be diagonalized using 
\[ T= \begin{bmatrix} 1&-\tau\alpha&-1&0\\ 1&0&1&-2\\ 0&1&0&0\\ 0&0&1&0 \end{bmatrix}. \] 
Hence, the passivity holds provided that \[ a\tau\frac{k}{m}-\frac{d}{m}=a\tau\alpha^2-\beta\leq 0.\]
Hence, the system \eqref{eq:harmonic_stiff} is passive if $\tau\in\left[0,\frac{d}{ak}\right]$. 

Let $d=k=a=1.$ The numerical resolution of the 
LMI using YALMIP \cite{Lofberg2004} with the MOSEK solver \cite{mosek} yields $\tau \in \left[0,1.0002\right],$ for $\Omega_1=0,$ and  $\tau \in 
\left[0,0.009\right],$ for $\Gamma_1=0.$ This shows that the presence of the $\Gamma_1$-term leads to less conservative results.

\subsection{Harmonic oscillator with delayed damping}
A harmonic oscillator with delayed damping $d>0$ can be written as a pH system with delay terms of the form \eqref{eq:pH_delay}
\begin{align}
\nonumber
\dot x(t)&=\begin{bmatrix}
0&1 \\ -1&0
\end{bmatrix}Hx(t)+\begin{bmatrix}
0&0\\ 0&-d
\end{bmatrix}Hx(t-\tau)+\begin{bmatrix}
    0 \\ 1
\end{bmatrix}u(t)\\
y(t)&= \begin{bmatrix}
    0 & 1
\end{bmatrix} H x(t),\quad H:=\begin{bmatrix}
    k &0 \\ 0& m^{-1}
\end{bmatrix}. \label{eq:harmonic}
\end{align}
This system has $\hat{R}=0$ and therefore the frequency sweeping condition from Proposition~\ref{prop:sweeping}~(i) is violated. Thus, the system is not stable for all delays and it is also not port-Hamiltonian in the sense of \cite{BreitHU24}. In the following, we use the LMI conditions from Proposition~\ref{prop:stable} to determine the range of $\tau$ where the system is stable and compare them with the exact value of $\tau$ which can be determined by solving the characteristic equation \eqref{eq:characteristic} by searching for solutions on the imaginary axis. 
\vspace{2mm}\\
The characteristic equation of the zero-input system \eqref{eq:harmonic}, i.e.\ $u=0$, is given by 
\[
\lambda^2+dm^{-1}\lambda e^{-\lambda \tau} +km^{-1}=0
\]
We look for  purely imaginary roots $\lambda= i \omega, \hspace{2mm} \omega \in \mathbb{R}$ with $\omega\neq 0$. 
Substitute into the characteristic equation, we obtain 
\begin{align}
    \label{eq:characteristic_example}
    k - m\omega^2 + d\omega\sin(\omega\tau)=0 \quad \text{and} \quad d\omega\cos(\omega\tau)=0
\end{align}
which implies that 
$ \cos(\omega \tau)=0$  hence $\omega \tau = \tfrac{\pi}{2} + n\pi, \hspace{2mm} n \in \mathbb{Z}$. Then $\sin(\omega \tau)= (-1)^n$ and considering 
the real-part of \eqref{eq:characteristic_example} leads to 
\[
k-m\omega^2+d\omega (-1)^n=0.
\]
Since $d,m>0,$ the first crossing occurs exactly at  
\[
\tau_0=\tfrac{\pi/2}{\tfrac{d + \sqrt{d^2 + 4mk}}{2m}}.
\]
We conclude that, system \eqref{eq:harmonic} is exponentially stable for $\tau \in \left[0, \tfrac{\pi m }{d + \sqrt{d^2 + 4mk}}\right).$\\
In order to check the feasibility of the proposed LMI numerically, we set $k=m=1.$ In this case, system \eqref{eq:harmonic} is exponentially stable for $\tau \in [0, 0.9708).$

Regarding the passivity of \eqref{eq:harmonic} we observe that for $\tau=0$ the system is a classical port-Hamiltonian system and therefore passive~\cite{CherifiGernandtHinsen2024}.  Looking at the poles of the transfer function we can find that the transfer function is not positive real for all $\tau>0$ and therefore using Proposition~\ref{prop:positive real} 
 the system \eqref{eq:harmonic} is not passive.

\begin{table}[htbp]
    \centering
    \caption{Stability interval $\tau \in \left[ \tau_{min} , \tau_{max}\right] $}
    \label{tab:passivity}
    \begin{tabular}{ccccc}
        \toprule
    Storage function &   $\tau_{min}$ & $\tau_{max}$  \\
        \midrule
       $\mathcal{S}$ given by \eqref{eq:storage_fixed_delay_PH} &0  & 0.9687\\  
      $\mathcal{S},$ with $\mathcal{P}=\begin{bmatrix}
           Q_{11}& 0\\
           0&Q_{22}
       \end{bmatrix}$ &0 & 0.8660\\ 
        $\mathcal{S}$ with $\Gamma_1=0$& 0  & 0.8660 \\
        $\mathcal{S}$ with $\Omega_1=0$&0  &0.9687 \\
      $\mathcal{S}$ with $\Omega_1=\Gamma_1=0$&0  & 0.0001\\

        \bottomrule
    \end{tabular}
\end{table}

\section{Conclusion}
We have formulated delay-dependent and independent passivity conditions for linear 
port-Hamiltonian systems with multiple constant delays in terms of linear matrix 
inequalities. We further proved that this passivity property is preserved under negative 
feedback interconnections, retaining one of the key structural properties of 
port-Hamiltonian modeling. The proposed framework is further supported by an application to district heating networks and to a harmonic oscillator with delayed damping. Future work may focus on extending these results to 
nonlinear delay systems, including distributed and time-varying delays, as well as 
discrete-time delay systems.

\section*{Acknowledgments}
This work was funded by the Deutsche Forschungsgemeinschaft (DFG, German
Research Foundation) – Project-ID 531152215 – CRC~1701.

\bibliographystyle{abbrv}
\bibliography{references}

\end{document}